\numberwithin{equation}{section}
\newcommand\N{\mathbb{N}}
\newcommand\GL{{\mathrm{GL}}}
\newcommand{\Q}{\mathbb{Q}}
\newcommand{\Z}{\mathbb{Z}}
\newcommand{\F}{\mathbb{F}}
\newcommand\Gal{{\mathrm {Gal}}}
\newcommand\Sym{{\mathrm {Sym}}}
\newcommand\iso{{\> \simeq \>}}
\newcommand\linee{{
        {\begin{tiny}
        \begin{xymatrix}{
         \bullet  \ar@{-}[d] \\ 
         \bullet  \ar@{-}[d] \\
         \bullet} 
       \end{xymatrix}
       \end{tiny}} }}  
\newcommand\diamondd{{
        \begin{tiny}
        \begin{xymatrix}{
         & \bullet  \ar@{-}[dl] \ar@{-}[dr] &  \\ 
         \bullet  \ar@{-}[dr] &   &   \bullet  \ar@{-}[dl] \\
         & \bullet & } 
       \end{xymatrix}
       \end{tiny} }}
\newtheorem{thm}{Theorem}[section]
\newtheorem{prop}[thm]{Proposition}
\newtheorem{lemma}[thm]{Lemma}
\theoremstyle{definition}
\theoremstyle{remark}
\newtheorem{remark}[thm]{Remark}
\theoremstyle{definition}
\theoremstyle{remark}
\theoremstyle{remark}
\title[]{Reduction of certain crystalline representations and local constancy in the weight space}
\author{Shalini Bhattacharya}
\address{Max Planck Institute for Mathematics, Bonn}
\email{shalini@mpim-bonn.mpg.de, shaliniwork16@gmail.com}
\begin{document}

\maketitle

\section{Introduction}
Let $p$ be an odd prime number. Let $E$ be a finite extension of $\Q_p$ and let $v:\bar\Q_p^*\rightarrow\Q$ 
be the normalized valuation
so that $v(p)=1$. Let $ m_E$ be the maximal ideal in the ring of integers $\mathcal{O}_E$ of $E$.
 For any integer $k\geq 2$ and any $a_p\in m_E$, let $D_{k,a_p} = Ee_1 \oplus Ee_2$ be the filtered 
 $\varphi$-module, where the Frobenius operator $\varphi$ is 
 given by the matrix $\left(\begin{smallmatrix} 0 & -1\\ p^{k-1} & a_p\end{smallmatrix}\right)$
 with respect to the basis $\langle e_1, e_2\rangle$,
 %satisfying
%$\phi(e_1) = p^{k-1}e_2$ and $\phi(e_2) = -e_1+a_pe_2$, %as defined in [Ber11] . 
 and the filtration  is given by
 $$\mathrm{Fil}^i(D_{k,a_p})=\begin{cases}
                                                     Ee_1\oplus Ee_2, &\text{if } i\leq 0\\
                                                     Ee_1, &\text{if } 1\leq i\leq k-1\\
                                                     0, &\text{if } k\leq i.
                                                     \end{cases}$$
  Now let $V=V_{k,a_p}$ be the unique two-dimensional
irreducible crystalline representation of $G_{Q_p}:=\mathrm{Gal}(\bar\Q_p|\Q_p)$ such that $\mathrm{D_{cris}}(V^*) = D_{k,a_p}$, where $V^*$ denotes the dual representation of $V$. The existence of such representations follows from the theory of Colmez and Fontaine \cite{Colmez-Fontaine}. We recall that $V_{k,a_p}$ has  Hodge-Tate weights $(0,k-1)$ and slope $v(a_p)>0$.

 The semisimplification  of the mod $p$ reduction $\bar V_{k,a_p}$ with respect to a $G_{\Q_p}$-stable integral lattice in $V_{k,a_p}$ 
is independent of the choice of the lattice. 
 Despite the variety of non-isomorphic irreducible two-dimensional crystalline representations $V$ in characteristic 0 which are indexed by the tuples $(k,a_p)$ up to twists,
 one has very limited choice for their semisimplified reductions $\bar V$ at the mod $p$ level. 
 %Over the years, many people studied 
 The behaviour of the mod $p$ reductions of these objects $V_{k,a_p}$ has been studied by several mathematicians.
 The explicit shape of $\bar V_{k,a_p}$ has been computed only for small weights $k \leq 2p+1$ \cite{Edixhoven92, [Br03]},  small slopes $v(a_p)<2$
 \cite{[BG09], Buzzard-Gee13, [GG15], Bhattacharya-Ghate, Bhattacharya-Ghate-Rozensztajn16}, 
 or when the slope is very large compared to the weight $k$ \cite{Berger-Li-Zhu04}. For small values of $k$ and $p$, now one can also compute these reductions using the algorithm given in \cite{[Roz17]}. 
 
 In this article we attempt to study how the reduction behaves with varying weight $k$, where $a_p\in m_E$ is kept constant.
 Let us begin by recalling the following result about the local constancy of the map 
 $k\mapsto \bar V_{k,a_p}$, for any fixed non-zero $a_p\in m_{\bar\Z_p}$.
 
 \begin{thm}[Thm B,\cite{Berger12}]\label{Berger} Let $\alpha(r) := \underset{n\geq 1}\sum \lfloor r/p^{n-1}(p-1)\rfloor$, for any $r\in\Z$.\\
 If $a_p\neq 0$ and $k > 3v(a_p)+\alpha(k-1)+1$, then there exists $m=m(k,a_p)$ such that $\bar V_{k',a_p} \cong\bar V_{k,a_p}$,
  if $k'\geq k$ and $k'-k\in p^{m-1}(p-1)\Z$.
 \end{thm}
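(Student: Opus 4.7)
The strategy is to argue via the Wach-module framework of Berger. By Berger's theorem, the crystalline representation $V_{k,a_p}$ corresponds functorially to a Wach module $N(V_{k,a_p})$, a free rank-two $\mathcal{O}_E[[\pi]]$-module equipped with commuting actions of $\varphi$ and $\Gamma$; any $G_{\Q_p}$-stable $\mathcal{O}_E$-lattice $T \subset V_{k,a_p}$ gives a Wach lattice $N(T)$, and the \'etale $(\varphi,\Gamma)$-module of the mod $p$ reduction $T/pT$ is recovered by reducing $N(T)$ modulo $p$ and inverting $\pi$. Via Fontaine's equivalence, it therefore suffices to show that, for $m$ sufficiently large (depending on $k$ and $a_p$) and for $k'-k \in (p-1)p^{m-1}\Z_{\geq 0}$, the Wach lattices of $V_{k,a_p}$ and $V_{k',a_p}$ become isomorphic modulo $p$.

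The first step is to produce an explicit integral model for $N(V_{k,a_p})$. Following the method of Berger-Li-Zhu, I would seek a basis in which $\varphi$ acts by a matrix of the form $\left(\begin{smallmatrix} 0 & -1 \\ q^{k-1} & P(\pi)\end{smallmatrix}\right)$, where $q=\varphi(\pi)/\pi=((1+\pi)^p-1)/\pi$ and $P(\pi) \in \mathcal{O}_E[[\pi]]$ is determined (together with the matrix of a topological generator of $\Gamma$) by the commutation relation $\varphi\Gamma=\Gamma\varphi$ subject to $P(0)=a_p$. In slope at most $1$ one can take $P$ constant; in general $P$ must be constructed by a successive-approximation procedure, correcting the matrices modulo increasing powers of $q$ (or of $\pi$). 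The hypothesis $k > 3v(a_p) + \alpha(k-1) + 1$ is precisely the bound needed for this iteration to remain inside $\mathcal{O}_E[[\pi]]$: the $3v(a_p)$ contribution absorbs the denominators produced when inverting $a_p$-related expressions at each step, while $\alpha(k-1)$ controls the $p$-adic losses incurred by the $\Gamma$-action on monomials $\pi^j$ for $j \le k-1$.

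With the integral Wach module in hand, the second step is a congruence comparison of $N(V_{k,a_p})$ and $N(V_{k',a_p})$ modulo $p$ when $k' = k + (p-1)p^{m-1}N$. The central point is that the matrix entry $q^{k-1}$ is a $p$-adically continuous function of $k$ in an appropriate $(p,\pi)$-adic topology, with modulus of continuity governed by the exponent $(p-1)p^{m-1}$; moreover, by the integral construction of step one, both $P(\pi)$ and the $\Gamma$-matrix inherit continuous dependence on $k$. Taking $m$ large enough that every such discrepancy lies below the visible range modulo $p$ --- the required size of $m$ being controlled by the $v(a_p)$- and $\alpha(k-1)$-type denominators from step one --- yields an isomorphism $N(V_{k',a_p})/p \cong N(V_{k,a_p})/p$ as $(\varphi,\Gamma)$-modules, and hence $\bar V_{k',a_p} \cong \bar V_{k,a_p}$ via Fontaine's equivalence.

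The principal obstacle is the first step: carrying out the iterative construction of $(P(\pi),\,\Gamma\text{-matrix})$ with tight control of denominators at every stage, so that the bound $k > 3v(a_p) + \alpha(k-1) + 1$ emerges precisely as the threshold for integrality and for the required $p$-adic continuity of the Wach matrices in $k$. Once that is in place, the comparison of step two is essentially formal, and the descent to $\bar V_{k,a_p}$ is automatic.
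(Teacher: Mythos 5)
This statement is not proved in the paper at all: it is quoted verbatim as Theorem~B of Berger's article \cite{Berger12}, and the author uses it only as background, so there is no in-paper argument to compare against. Measured instead against Berger's published proof, your plan does reconstruct the right framework --- Wach modules $N(T)$, recovery of the $(\varphi,\Gamma)$-module of $T/pT$ from $N(T) \bmod p$, a BLZ-style basis with $\mathrm{Mat}(\varphi)=\left(\begin{smallmatrix}0 & -1\\ q^{k-1} & *\end{smallmatrix}\right)$, and the reading of $\alpha(k-1)=v_p\bigl(\prod_{i=1}^{k-1}(\chi(\gamma)^i-1)\bigr)$ as the price of passing between the Wach lattice and a standard lattice. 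That is genuinely the skeleton of the argument in \cite{Berger12}.

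However, what you have written is a plan rather than a proof, and the one step you do commit to is wrong as stated. You claim the comparison of $N(V_{k,a_p})$ and $N(V_{k',a_p})$ rests on $q^{k-1}$ being a $p$-adically continuous function of $k$ with modulus of continuity $(p-1)p^{m-1}$. Since $q=((1+\pi)^p-1)/\pi\equiv \pi^{p-1}\bmod p$ is topologically nilpotent rather than a $1$-unit, $q^{k'-1}$ and $q^{k-1}$ are \emph{not} congruent modulo $p$ for $k'\neq k$; no choice of $m$ makes the $\varphi$-matrices close in the naive sense. The actual mechanism in Berger's proof is on the $\Gamma$-side: the hypothesis $k'-k\in p^{m-1}(p-1)\Z$ gives $\chi(\gamma)^{k'-1}\equiv\chi(\gamma)^{k-1}\bmod p^{m}$, and the differing powers of $q$ in the $\varphi$-matrices are absorbed by an explicit change of lattice whose elementary divisors are controlled by $\alpha(k-1)$ and $v(a_p)$ --- this is exactly where the threshold $k>3v(a_p)+\alpha(k-1)+1$ is consumed. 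Finally, the ``principal obstacle'' you name (the integral successive-approximation construction with denominator control) is the entire technical content of the theorem, and deferring it means the bound $3v(a_p)+\alpha(k-1)+1$ is never actually derived. So the proposal identifies the correct toolbox but does not constitute a proof.
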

 In the context of the theorem above, one may ask the following questions:
 \begin{enumerate}
 \item Can one improve the lower bound $3v(a_p)+\alpha(k-1)+1$ on $k$?
 \item What are the possible values of the constant $m(k,a_p)$? \\
 For  fixed $a_p$,  is it possible to choose an $m(k,a_p)$ that works for all $k$?
 This phenomenon, when occurs, can be referred to as "uniform local constancy" of the reduction.
 \end{enumerate}
 One may expect uniform local constancy to hold generically, as it is true for small slopes,
 where the reductions have been explicitly computed. 
 Let us recall the cases where we know explicit (smallest) values of $m(k,a_p)$: 
 \begin{itemize}
 \item $v(a_p)\in(0,1)$: 
 We have $m(k,a_p)=1$, cf. \cite{[BG09]},\\
 unless $k\equiv 3\mod (p-1)$ and $v(a_p)=1/2$. 
 For $v(a_p)=1/2$ and $k\equiv 3\mod (p-1)$,  the behaviour of the reduction is complicated, and
 it is clear from the main theorem of \cite{Buzzard-Gee13} that $m(k,a_p)$ depends on $k$ in a more serious way.
 \item $v(a_p)=1$:
 We have $m(k,a_p)=\begin{cases}3, &\text{if } k\equiv 3\mod (p-1)\\ 
 2, &\text{if } k\not\equiv 3, 4\mod (p-1).\end{cases}$\\
 For  $k\equiv 4\mod (p-1)$, the reductions are more complex \cite{Bhattacharya-Ghate-Rozensztajn16}.
 \item $v(a_p)\in (1,2)$:
 We have $m(k,a_p)=\begin{cases}3, &\text{if } k\equiv 3\mod (p-1)\\ 2, &\text{if } k\not\equiv 3\mod (p-1),\end{cases}$\\
 unless $v(a_p)=3/2$ and $k\equiv 5\mod (p-1)$.
 For the remaining exceptional case, i.e., when $v(a_p)=3/2$ and $k\equiv 5\mod (p-1)$,  we refer to the ongoing work \cite{[GR18]}.
 \end{itemize}
 We notice that the value of $m(k,a_p)$ does not depend on $k$ in most cases, 
 but it does increase with the slope $v(a_p)$ in general.
 
 In this article %we give a proof of the fact $m(k,a_p)\leq 2v(a_p)$, provided $2v(a_p)+2< k \leq p+1$.
 we compute $m(k,a_p)$ for some small values of $k$. We also improve the lower bound on $k$ a bit.
 However we could not avoid  a lower bound that is linear in $v(a_p)$, as in Theorem \ref{Berger} by Berger.
 More precisely, we prove that
 \begin{thm}\label{cons}
 If $2v(a_p)+2 < k \leq p+1$, then $m(k,a_p)\leq 2v(a_p)+1$. 
 
 Under the extra assumption 
 $\frac{a_p}{p^{(k-2)/2}}\not\equiv\pm 1\mod \wp$, 
 the same is true for odd weights $k=2v(a_p)+2\leq p+1$.
 \end{thm}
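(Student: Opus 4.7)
The plan is to follow the Wach-module strategy of \cite{Berger12}, refining its $p$-adic precision estimates to the small-weight regime $k \leq p+1$.

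The first step is to attach to $V_{k,a_p}^*$ its Wach module $N_k$, a rank-two module over $\mathcal{O}_E[[\pi]]$ with explicit $\varphi$- and $\Gamma$-actions compatible with the filtered $\varphi$-module $D_{k,a_p}$. In the range $k \leq p+1$, the polynomial expressions entering $\varphi$ and $\Gamma$ have $\pi$-degree at most $p$, so $N_k$ can be written in closed form. For a $G_{\Q_p}$-stable $\mathcal{O}_E$-lattice $T \subset V_{k,a_p}^*$, the integral Wach lattice $N(T) \subset N_k$ is obtained from a natural basis of $N_k$ by a unimodular change of basis scaled by a power of $a_p$; this inversion of $a_p$ is what couples the precision budget to the slope $v(a_p)$ and ultimately explains the factor of $v(a_p)$ in the final estimate.

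Next, given $k' \geq k$ with $k' - k \in p^{m-1}(p-1)\Z$ for $m = 2v(a_p)+1$, I would compare $N_k$ with $N_{k'}$. The heart of the argument is a congruence of Kummer type showing that the power series entering the $\Gamma$-action and the Frobenius matrix of $N_k$ and $N_{k'}$ agree modulo $p^m$ in a suitable basis; together with the uniqueness of Wach modules, this yields $N_k \equiv N_{k'} \pmod{p^m}$. After passing to the integral lattices $N(T)$ and $N(T')$, the normalisation by $a_p$ consumes up to $2v(a_p)$ units of precision, leaving one unit of $p$-adic precision intact. Reducing mod $p$ then gives an isomorphism $N(T)/p \cong N(T')/p$ of $(\varphi,\Gamma)$-modules, hence $\bar V_{k,a_p} \cong \bar V_{k',a_p}$.

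For the boundary case $k = 2v(a_p)+2$ with $k$ odd, the precision budget is exactly saturated. The hypothesis $a_p/p^{(k-2)/2} \not\equiv \pm 1 \pmod{\wp}$ guarantees that the slope-normalised Frobenius on $D_{k,a_p}$ has regular semisimple reduction mod $\wp$, allowing the basis-change producing $N(T)$ to be carried out with no extra loss of precision. The main obstacle in the whole argument is this precision tracking: one must verify that every denominator from inverting $a_p$ is absorbed by $p^{2v(a_p)}$, and that the $\Gamma$-compatibility constraints on the Wach module impose no further loss. The bound $k \leq p+1$ is decisive throughout, keeping the computation within the truncation-free $\pi$-degree range so that no higher-order corrections appear.
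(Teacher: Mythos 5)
You are proposing to sharpen Berger's Wach-module argument, which is a genuinely different route from the paper's: the paper never touches Wach modules, but works on the automorphic side, realising $\bar V_{k',a_p}$ through the lattice $\Theta_{k',a_p}$ in $\mathrm{ind}_{KZ}^{G}\Sym^{k'-2}\bar\Q_p^2/(T-a_p)$ and using explicit Hecke-operator computations (Proposition \ref{mainProp}) to kill the graded pieces $\mathrm{ind}_{KZ}^G\bigl(V_r^{(m)}/V_r^{(m+1)}\bigr)$ for $1\le m\le \lfloor v(a_p)\rfloor$, after which the mod $p$ Local Langlands correspondence pins down $\bar V_{k',a_p}$ as $\mathrm{ind}_{G_{\Q_{p^2}}}^{G_{\Q_p}}(\omega_2^{k-1})$; Theorem \ref{cons} is then deduced from this stronger statement (Theorem \ref{thmending}), which identifies the reduction explicitly rather than merely proving local constancy.

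As written, however, your argument has a genuine gap: the two quantitative assertions it rests on are precisely what must be proved, and nothing in the proposal establishes them. First, the congruence ``$N_k\equiv N_{k'}\pmod{p^m}$ in a suitable basis'' with $m=2v(a_p)+1$ under the hypothesis $k>2v(a_p)+2$ is not available off the shelf: Berger's implementation of exactly this comparison is what produces the hypothesis $k>3v(a_p)+\alpha(k-1)+1$ and a constant $m(k,a_p)$ that is not made explicit, and you give no mechanism for improving either. Second, a closed-form Wach module (\`a la Berger--Li--Zhu) is only available for the small weight $k\le p+1$; the companion weight $k'$ ranges over a whole congruence class and is typically enormous, with $v(a_p)$ far below $\lfloor (k'-2)/(p-1)\rfloor$, so $N_{k'}$ cannot be ``written in closed form'' and compared term by term --- one would need the deformation-theoretic apparatus, with its attendant loss of precision, which is exactly where the worse bound originates. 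Finally, in the boundary case the hypothesis $a_p/p^{(k-2)/2}\not\equiv\pm1\bmod\wp$ does not enter as a regularity condition on a normalised Frobenius: in the paper it is the condition for the scalar ${b\choose m}\bigl(\tfrac{p^b}{a_p^2}-1\bigr)$ multiplying $F_m(X,Y)$ in $(T-a_p)(f+f')$ to be a unit, and your reinterpretation would need independent justification. In short, the strategy is coherent in outline, but the precision bookkeeping that constitutes the entire difficulty is assumed rather than carried out.
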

\begin{remark}
(a) Note that the hypothesis of Theorem \ref{cons} forces that $a_p\neq 0$. 
In fact, it follows from Prop. 4.1.4 in \cite{Berger-Li-Zhu04} that there is no local constancy  
with respect to weight at $a_p=0$.

(b) For $k\leq p-1$ (or $k=p, p+1$), the constant $m(k,a_p)$ has been proved to exist only under the condition 
$k>3v(a_p)+1$ (or $k>3v(a_p)+2$). 
Direct computation gives us a slightly better lower bound $2v(a_p)+2$ on $k$. %, for $k\leq p+1$. 
 However it is not clear if this is a strict bound or why such a bound should be necessary at all. 
\end{remark}
Let $G_{\Q_{p^2}}$ denote the subgroup $\mathrm{Gal}(\bar\Q_p|\Q_{p^2})$ of index $2$ in $G_{\Q_p}$, where $\Q_{p^2}$ is the unique quadratic unramified extension of $\Q_p$. Let $\omega:G_p\rightarrow \bar\F_p^* $ and $\omega_2:G_{\Q_{p^2}}\rightarrow \bar\F_p^*$ denote the fundamental characters of level one and two respectively. 

Theorem \ref{cons} is an easy corollary of the following, which is the main result of this article.
 \begin{thm}
 Let $k'\equiv k \mod (p-1)$, for some $2v(a_p)+2 < k \leq p+1$. 
 If $t=v(k'-k)\geq 2v(a_p)$, then $\bar V_{k',a_p}$ is %isomorphic to $\bar V_{k,a_p}$, which is
  irreducible of the form $\mathrm{ind}_{G_{\Q_{p^2}}}^{G_{\Q_p}}(\omega_2^{k-1})$.
 \end{thm}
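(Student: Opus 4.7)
The plan is to combine an explicit computation of $\bar V_{k,a_p}$ in the small-weight range $2v(a_p)+2 < k \le p+1$ with a Wach-module perturbation argument that propagates this computation to every $k' \equiv k \pmod{p-1}$ satisfying $v(k'-k) \geq 2v(a_p)$. Since $\bar V_{k',a_p}$ depends only on $N(V_{k',a_p}^*)/p$ as a $(\varphi,\Gamma)$-module, matching it with $N(V_{k,a_p}^*)/p$ will simultaneously give irreducibility and the shape $\mathrm{ind}_{G_{\Q_{p^2}}}^{G_{\Q_p}}(\omega_2^{k-1})$.

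For the base case, I would first verify that $\bar V_{k,a_p} \cong \mathrm{ind}_{G_{\Q_{p^2}}}^{G_{\Q_p}}(\omega_2^{k-1})$ for each $k$ in the stated range. When $k \le p$, the filtered $\varphi$-module $D_{k,a_p}$ is a Fontaine--Laffaille module and the obvious lattice $\mathcal{O}_E e_1 \oplus \mathcal{O}_E e_2$ is strongly divisible, with divided Frobenii $\varphi_{k-1}(e_1) = e_2$ and $\varphi_0(e_2) = -e_1 + a_p e_2$. Reducing modulo $m_E$ gives $\bar\varphi_{k-1}(\bar e_1) = \bar e_2$ and $\bar\varphi_0(\bar e_2) = -\bar e_1$; the hypothesis $v(a_p) < (k-2)/2$ prevents the existence of any $\bar\varphi$-stable line, so the reduction is irreducible, and a short computation of $\bar\varphi^2$ on an eigenvector identifies it as $\mathrm{ind}(\omega_2^{k-1})$. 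The boundary case $k = p+1$ is handled by the same argument in the Breuil module framework, and the target representation is itself irreducible in our range since $k-1 \not\equiv 0 \pmod{p+1}$.

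For the propagation, I would work with the Wach modules $N(V_{k,a_p}^*)$ and $N(V_{k',a_p}^*)$ over $A^+ = \mathcal{O}_E[[\pi]]$; these carry commuting $\varphi$- and $\Gamma$-actions and recover $\bar V_{k,a_p}$ (resp.\ $\bar V_{k',a_p}$) upon reduction modulo $p$. In Berger's explicit model the two matrices of Frobenius differ essentially only by the substitution $q^{k-1} \mapsto q^{k'-1}$, where $q = \varphi(\pi)/\pi$; that is, by the factor $q^{k'-k} - 1$. Using $q \equiv \pi^{p-1} \pmod{p}$, the congruence $k' \equiv k \pmod{p-1}$, and the divisibility $p^t \mid k'-k$ provided by $t \geq 2v(a_p)$, this factor lies in an ideal small enough that a $\varphi$-twisted change of basis on the mod-$p$ Wach module can absorb it. The solvability of the resulting twisted-commutator equation for the change-of-basis matrix has an integrality bound controlled by $a_p^2$ (coming from the determinant of the Frobenius matrix modulo $p$), which is precisely where the exponent $2v(a_p)$ in the hypothesis is needed. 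The resulting isomorphism $N(V_{k',a_p}^*)/p \cong N(V_{k,a_p}^*)/p$ yields $\bar V_{k',a_p} \cong \bar V_{k,a_p} \cong \mathrm{ind}(\omega_2^{k-1})$.

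The main technical obstacle will be the valuation bookkeeping in this change of basis. The lower bound $k > 2v(a_p) + 2$ on the base weight enters precisely to ensure that contributions to the Wach matrix coming from the Hodge-filtration jumps do not spoil the estimate, and the exponent $2v(a_p)$ in the hypothesis is exactly what the $a_p^{-2}$ factor from the determinant can absorb. A secondary subtlety is verifying that the resulting mod-$p$ isomorphism truly respects the $\Gamma$-structure in addition to the $\varphi$-structure, since only the full $(\varphi, \Gamma)$-module determines the Galois representation; I would handle this by choosing the change of basis so that it is equivariant modulo $\pi$ and then propagating $\Gamma$-compatibility inductively through higher powers of $\pi$.
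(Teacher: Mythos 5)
Your base-case computation is fine (and is in any case classical: for $2\leq k\leq p+1$ and $a_p\in m_E$ one always has $\bar V_{k,a_p}\cong \mathrm{ind}_{G_{\Q_{p^2}}}^{G_{\Q_p}}(\omega_2^{k-1})$, with no need for the hypothesis $k>2v(a_p)+2$; the paper invokes this via Theorem 2.6 of Edixhoven). The gap is in the propagation step, which is where the entire content of the theorem lies. You assert that ``in Berger's explicit model the two matrices of Frobenius differ essentially only by the substitution $q^{k-1}\mapsto q^{k'-1}$,'' but such an explicit Wach-module model with Frobenius matrix of the shape $\left(\begin{smallmatrix} 0 & -1\\ q^{k'-1} & a_p\end{smallmatrix}\right)$ (up to controlled error) is only available, via Berger--Li--Zhu, in the large-slope regime $v(a_p)>\lfloor (k'-2)/(p-1)\rfloor$. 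Here $a_p$ is fixed and $k'$ ranges over an unbounded set of weights, so that hypothesis fails for all but finitely many $k'$, and no explicit basis of $N(V_{k',a_p}^*)$ is known. Berger's own execution of essentially the strategy you describe is exactly Theorem B quoted in the introduction: it requires $k>3v(a_p)+\alpha(k-1)+1$ and, crucially, produces a constant $m(k,a_p)$ that is \emph{not} explicit, precisely because the convergence of the successive-approximation/change-of-basis argument is not effectively controlled. Your claim that the twisted-commutator equation is solvable with loss exactly $a_p^{2}$, yielding the sharp threshold $t\geq 2v(a_p)$, is the theorem itself; deferring it to ``valuation bookkeeping'' leaves the proof without its key estimate, and there is no evidence offered that the bookkeeping closes at $2v(a_p)$ rather than at Berger's $3v(a_p)+\alpha(k-1)$ or worse.

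For contrast, the paper avoids Wach modules entirely and works on the automorphic side of the mod $p$ Local Langlands correspondence: it computes directly with the large weight $r=k'-2$, uses explicit functions in $\mathrm{ind}_{KZ}^{G}\Sym^r\bar\Q_p^2$ and the Hecke operator $T$ to show (Proposition \ref{mainProp}, via Lemmas \ref{polynomial} and \ref{cong2}) that the images of $\mathrm{ind}_{KZ}^G\bigl(V_r^{(m)}/V_r^{(m+1)}\bigr)$ vanish in $\bar\Theta_{k',a_p}$ for $1\leq m\leq \lfloor v(a_p)\rfloor$, and then identifies the surviving quotient with the supercuspidal $\pi(p-1-b,0,\omega^{b})$. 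The hypothesis $t\geq 2v(a_p)$ enters there through the binomial congruence of Lemma \ref{cong2} and the integrality of $p^{m+j}/a_p^2$, not through a determinant factor in a change of basis. If you want to pursue the Wach-module route, you would need to either construct explicit Wach modules for $V_{k',a_p}$ in the small-slope/large-weight regime or find an effective replacement for the compactness step in Berger's argument; neither is supplied here.
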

 This shows that if $k'$ is close enough to $k$ in the weight space with an explicit upper bound on their distance that is linear in
 $v(a_p)$, then $\bar V_{k',a_p}$ is isomorphic to $\bar V_{k,a_p}$.
 Based on the known results for slopes $<2$, one hopes that this
 upper  bound for 
 $m(k,a_p)$ should work for almost all $k$ and not just for $k\leq p+1$.
 However, here our computations are limited to the weights $k'$ close to the small weights $k\leq p+1$, as stated above.
 
 The proof (of Thm. \ref{thmending}) uses the compatibility of $p$-adic and mod $p$ Local Langlands correspondences, following the method of \cite{[Br03], [BG09]}.
 We generalise some of the techniques introduced in \cite{Bhattacharya-Ghate} and \cite{Bhattacharya-Ghate-Rozensztajn16}.
 More details about the proof are given in the next section.
 
 \section{Basics}\label{basics}

In this section we quickly recall some notations and then explain the basic principle of our proof.
\subsection{The Hecke operator }
  \label{Hecke}

Let $G = \mathrm{GL}_2(\Q_p)$, $K = \mathrm{GL}_2(\Z_p)$ be the standard
maximal compact subgroup of $G$ and $Z \cong \Q_p^\times$ be the center of  the group $G$.
Let us begin by recalling the Hecke operator $T$ which acts $G$-linearly on the compact induction %that generates the Hecke algebra 
$\mathrm{ind}_{KZ}^G V$ for certain representations $V$ of $KZ$.
%Further details can be found in \cite{Br03} and \cite{Bhattacharya-Ghate}.

Let $R$ be a $\Z_p$-algebra and let $V = \Sym^r R^2\otimes D^s$ be the
usual symmetric power representation of $KZ$ twisted by a power of the
determinant character $D$, modeled on homogeneous polynomials of degree
$r$ in the variables $X$, $Y$ over $R$. For $g \in G$, $v \in V$, let
$[g,v] \in \mathrm{ind}_{KZ}^{G} V$ be the function with support in the coset
${KZ}g^{-1}$ given by 
  $$g' \mapsto
     \begin{cases}
         g'g \cdot v,  \ & \text{ if } g' \in {KZ}g^{-1} \\
         0,                  & \text{ otherwise.}
      \end{cases}$$ 
Any element of $\mathrm{ind}_{KZ}^G V$ is a $V$-valued function on $G$ that is compactly supported mod $KZ$ and thus is a finite linear combination of
functions of the form $[g,v]$, for $g\in G$ and $v\in V$.  The Hecke
operator  $T$ is defined by its action on these elementary functions via the formula

\begin{equation}\label{T} T([g,v(X,Y)])=\underset{\lambda\in\F_p}\sum\left[g\left(\begin{smallmatrix} p & [\lambda]\\
                                                 0 & 1
                                                \end{smallmatrix}\right),\:v\left(X, -[\lambda]X+pY\right)\right]+\left[g\left(\begin{smallmatrix} 1 & 0\\
                                                                                                                                                   0 & p
                                                \end{smallmatrix}\right),\:v(pX,Y)\right],\end{equation}
 where $[\lambda]$ denotes the Teichm\"uller representative of $\lambda\in\F_p$.

\subsection{The Local Langlands Correspondences}
\label{subsectionLLC}

Let $\Gamma $ denote the finite group $\GL_2(\F_p)$ which naturally acts on the two-dimensional vector space over $\bar\F_p$. For any $r\geq 0$, we have the symmetric power representations 
$$V_r:=\Sym^r\bar\F_p^2\in\mathrm{Rep}_{\,\bar\F_p}(\Gamma)$$ of dimension $r+1$. For $0 \leq r \leq p-1$, $\lambda \in \bar{\F}_p$ and $\eta : \Q_p^\times
\rightarrow \bar\F_p^\times$ a smooth character, we know that
\begin{eqnarray*}
  \pi(r, \lambda, \eta) & := & \frac{\mathrm{ind}_{KZ}^{G} V_r}{T-\lambda} \otimes (\eta\circ \mathrm{det})
\end{eqnarray*}
are smooth admissible representations of ${G}$, also irreducible in most cases. Recall that here $p\in KZ$ acts on $V_r:=\Sym^r\bar\F_p^2$ trivially and 
the rest of $KZ$ acts by the inflation of $K=\GL_2(\Z_p)\twoheadrightarrow \Gamma$.  
%Here $\mathrm{ind}_{KZ}^G$ denotes compact induction, and $T$  the Hecke operator. 
These objects $\pi(r,\lambda,\eta)$ together capture all possible irreducible representations of $G$ in characteristic $p$, as proved in \cite{Barthel-Livne94, BL, Breuil1}.

With this notation, Breuil's semisimple mod $p$ Local Langlands Correspondence %(see, e.g., 
\cite[Def. 1.1]{[Br03]} %)
is given by:
\begin{itemize} 
  \item  $\lambda = 0$: \quad\quad\qquad
             $\mathrm{ind}_{G_{\Q_{p^2}}}^{G_{\Q_p}}(\omega_2^{r+1}) \otimes \eta  \:\:\overset{LL}\longmapsto\:\: \pi(r,0,\eta)$,
  \item $\lambda \neq 0$: \quad
             $\left( \mu_\lambda \omega^{r+1}  \oplus \mu_{\lambda^{-1}} \right) \otimes \eta 
                   \:\:\overset{LL}\longmapsto\:\:  \pi(r, \lambda, \eta)^{ss} \oplus  \pi([p-3-r], \lambda^{-1}, \eta \omega^{r+1})^{ss}$, 
\end{itemize}
where $\{0,1, \ldots, p-2 \} \ni [p-3-r] \equiv p-3-r \mod (p-1)$.

On the other hand, by the $p$-adic Local Langlands correspondence we have the association 
$V_{k,a_p}\rightsquigarrow \Pi_{k,a_p}$, where $\Pi_{k,a_p}$ is the locally algebraic representation of $G$ given by %compact induction
\begin{eqnarray*} 
\Pi_{k, a_p} = \frac{ \mathrm{ind}_{{KZ}}^{G}
\Sym^r \bar\Q_p^2 }{(T-a_p)}, 
\end{eqnarray*} 
where $r=k-2 \geq 0$ and $T$ is the
Hecke operator as usual. Consider the standard lattice in $\Pi_{k,a_p}$ given by
\begin{equation} 
\label{definetheta}  
\Theta_{k, a_p} :=
\mathrm{image} \left( \mathrm{ind}_{KZ}^{G} \Sym^r \bar\Z_p^2 \rightarrow
\Pi_{k, a_p} \right) \iso \frac{ \mathrm{ind}_{KZ}^{G} \Sym^r \bar\Z_p^2
}{(T-a_p)(\mathrm{ind}_{KZ}^{G} \Sym^r \bar\Q_p^2) \cap
\mathrm{ind}_{KZ}^{G} \Sym^r \bar\Z_p^2 }.  
\end{equation} 
By the commutativity of the $p$-adic and mod $p$ Local Langlands Correspondence, conjectured in \cite{[Br03]} and
 proved in \cite{Berger10}, we know that
$$\bar\Theta_{k,a_p}^\mathrm{ss} :=\Theta_{k,a_p}\otimes \bar\F_p\iso LL(\bar{V}_{k,a_p}^{ss}).$$ 
  Since the correspondence ${LL}$ at the mod $p$ level is injective, computing
   ${LL}(\bar V_{k,a_p}^{ss})$ is enough to determine 
$\bar{V}_{k,a_p}^{ss}$. Therefore, we are going to study $\bar\Theta_{k,a_p}^{ss}$
as an object in $\mathrm{Rep}_{\:\bar\F_p} (G)$. %, under certain conditions on the weight and the slope.
The superscript `$ss$' will often be omitted, as we are always concerned about the semisimplified reduction.
\section{Computations}
\subsection{Some results in characteristic $p$} Here we prove some general lemmas in characteristic $p$ that will be useful in computing the reduction $\bar\Theta_{k,a_p}$.

%Inside $V_r:=\Sym^r\bar\F_p^2$, 
By the definition of $\bar\Theta_{k,a_p}$, we have a natural surjection
$$P:\mathrm{ind}_{KZ}^GV_r\twoheadrightarrow \bar\Theta_{k,a_p},$$ for $r=k-2$.

Consider the special polynomial 
$$\theta(X,Y):=X^pY-Y^pX=-X\cdot\underset{\lambda\in\F_p}\prod (Y-\lambda X)\in \Sym^{p+1}\bar\F_p^2=V_{p+1},$$
on which $\Gamma:=\GL_2(\F_p)$ acts by the determinant character.
Define for each $m\in \N$, 
$$V_r^{(m)}:=\{f\in V_r: \theta^m \text{ divides } f \text{ in } \bar\F_p[X,Y]\},$$ so that 
$V_r\supseteq V_r^{(1)}\supseteq V_r^{(2)}\supseteq\cdots$ is a chain of $\Gamma$-stable submodules of length 
$\lfloor \frac{r}{p+1}\rfloor +1$. Moreover, we know that $V_r^{(m)}\cong V_{r-m(p+1)}\otimes D^m$, 
where $D$ denotes the determinant character.
\begin{lemma}\label{divconds}
Let $F(X,Y)=\underset{i=0}{\overset{r}\sum} a_i X^{r-i}Y^i \in V_r $ be a polynomial such that 
$$\bar\F_p\ni a_i\neq 0 \implies i\equiv a\mod (p-1),$$
for some fixed congruence class $a\mod (p-1)$. 
Then for any $m\geq 0$, we have $F(X,Y)\in V_r^{(m)}$ if and only if the following conditions are satisfied:
\begin{itemize}
\item $a_i\neq 0\implies m\leq i\leq r-m$,
%\item $\sum a_i\equiv 0\in \bar\F_p$,
%\item $\sum a_i i(i-1)\cdots (i-j+1)\equiv 0\in \bar\F_p$, for $1\leq j\leq m-1$.
\item $\underset{i}\sum (j)! {i\choose j}a_i = 0\in \bar\F_p$, for $0\leq j\leq m-1$.
\end{itemize}
\end{lemma}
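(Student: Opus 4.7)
The plan is to exploit the fact that $\theta$ splits completely into distinct linear forms over $\bar\F_p$, and then translate divisibility by each linear factor into a linear condition on the coefficients $a_i$.

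First I would write down the factorization
\[
\theta(X,Y) \;=\; X^pY - Y^pX \;=\; -XY\bigl(Y^{p-1}-X^{p-1}\bigr) \;=\; -X\prod_{\lambda\in\F_p}(Y-\lambda X),
\]
so $\theta$ is (up to sign) a product of $p+1$ pairwise coprime linear forms in the UFD $\bar\F_p[X,Y]$. Therefore $\theta^m\mid F$ if and only if $X^m\mid F$, $Y^m\mid F$, and $(Y-\lambda X)^m\mid F$ for every $\lambda\in\F_p^*$. The first two conditions immediately give $a_i=0$ for $i>r-m$ and for $i<m$ respectively, yielding the first bullet of the lemma.

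For each nonzero $\lambda\in\F_p$, I would make the invertible change of variable $Y\mapsto Y+\lambda X$ and expand:
\[
F(X,Y+\lambda X) \;=\; \sum_{i} a_i\, X^{r-i}(Y+\lambda X)^i \;=\; \sum_{j\geq 0} Y^j\, X^{r-j}\, c_j(\lambda), \qquad c_j(\lambda):=\sum_i a_i\binom{i}{j}\lambda^{i-j}.
\]
Then $(Y-\lambda X)^m\mid F$ is equivalent to the vanishing of $c_j(\lambda)$ for $0\le j\le m-1$. Now the congruence hypothesis on the support of the $a_i$ enters: whenever $a_i\neq 0$ we have $\lambda^{i-j}=\lambda^{a-j}$ for $\lambda\in\F_p^*$, so
\[
c_j(\lambda) \;=\; \lambda^{a-j}\sum_i \binom{i}{j} a_i.
\]
Since $\lambda^{a-j}\in\bar\F_p^*$, the condition $c_j(\lambda)=0$ is independent of the particular $\lambda\in\F_p^*$ chosen, and is equivalent to $\sum_i\binom{i}{j}a_i=0$. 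Multiplying by $j!$ (and interpreting $(j)!\binom{i}{j}=i(i-1)\cdots(i-j+1)$ as the falling factorial, which is well-defined even when $j\geq p$) rewrites this precisely as the second bullet.

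The only real subtlety—which is where the congruence hypothesis does its work—is ensuring that a single linear condition in the coefficients $a_i$ simultaneously encodes divisibility by $(Y-\lambda X)^m$ for \emph{all} $\lambda\in\F_p^*$. Without the congruence $i\equiv a\pmod{p-1}$, one would a priori obtain $p-1$ distinct conditions (one for each $\lambda$), but the hypothesis collapses the $\lambda$-dependence into a nonzero scalar $\lambda^{a-j}$, so the whole family of divisibility constraints reduces to the single stated equation for each $j$. Combining this with the $X^m$ and $Y^m$ constraints proves both directions of the "if and only if".
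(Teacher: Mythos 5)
Your argument is correct and follows essentially the same route as the paper: factor $\theta$ into the $p+1$ pairwise coprime linear forms $X$ and $Y-\lambda X$, reduce $\theta^m\mid F$ to $m$-fold divisibility by each of them, and use the congruence hypothesis on the support of the $a_i$ to collapse the $\lambda$-dependence of the resulting linear conditions into the single functional $\sum_i\binom{i}{j}a_i$. The only cosmetic difference is that the paper tests $(z-\lambda)^m\mid f(z)$ for $f(z)=F(1,z)$ via vanishing of the ordinary derivatives $f^{(j)}(\lambda)$, whereas you read off Taylor coefficients after the substitution $Y\mapsto Y+\lambda X$; these differ by the unit $j!$ in the range $j<p$ where the lemma is applied, so the two computations yield the same conditions.
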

\begin{proof}
We consider $f(z)=\underset{i=0}{\overset{r}\sum} a_i z^i\in \bar\F_p[z]$, so that $F(X,Y)=X^r\cdot f(\frac{Y}{X})$. Note
\begin{eqnarray*}
\theta^m\mid F(X,Y) &\iff & F(X,Y)= (-X)^m\underset{\lambda\in \F_p}\prod (Y-\lambda X)^m F_1(X,Y), \quad F_1\in V_{r-(p+1)m},\\
                                     &\iff & X^m\mid F(X,Y) \text{ and }  
                                     f(Y/X)= \underset{\lambda\in \F_p}\prod (Y/X-\lambda )^m F_1(1,Y/X),\\
                                     &\iff& X^m\mid F(X,Y) \text{ and }  
                                     f(z)= \underset{\lambda\in \F_p}\prod (z-\lambda )^m f_1(z),\\
                                     &\iff& X^m, Y^m \mid F(X,Y) \text{ and }  
                                      (z-\lambda )^m \mid f(z), \quad \forall \lambda\in \F_p^*.
\end{eqnarray*}
The conditions $X^m, Y^m \mid F(X,Y) $ are equivalent to $a_i\neq 0\implies m\leq i\leq r-m$, and 
 $(z-\lambda)^m$ divides $f(z)$ if and only if $f(\lambda)=f'(\lambda)=\cdots =f^{(m-1)}(\lambda)=0\in\bar\F_p$.
 Looking at the coefficients of $f(z)$, for $\lambda\in \F_p^*$, we have 
 $$f^{(j)}(\lambda)=\underset{i} \sum a_i \cdot i(i-1)\cdots (i-j+1)\lambda^{i-j}=\lambda^{a-j}\cdot\underset{i}\sum a_i {i\choose j} j!,$$
using the hypothesis on the coefficients of $F(X,Y)$. This completes our proof.
 
 Note that as we are in the situation  $j< m\leq i\leq r-m$ here, the binomial coefficients ${i\choose j}$ above are all a priori non-zero,
 though some of them might vanish mod $p$. 
\end{proof}
For all integers $m\geq 0$ let us define the polynomials $F_m$ in $V_r$ as
\begin{equation}\label{F_m}F_m(X, Y):= X^mY^{r-m}-X^{r-b+m}Y^{b-m},\end{equation}
where $r\equiv b\mod (p-1)$, so that Lemma \ref{divconds} can be applied on
 $F_m$.
With this notation, we prove the following key lemma:

\begin{lemma}\label{polynomial}
Let %$r\equiv b\mod (p-1)$ as usual,  let
 $t=v(r-b)\geq 1$ and let $m\geq 1$. 

(a) For $b\geq 2m$, the polynomial
$F_m(X, Y)$ is divisible by $\theta^m$ but not by $\theta^{m+1}$.

(b) For $b>2m$, the image of $F_m$ generates the subquotient 
%$\frac{\theta^m V_{r-m(p+1)}}{\theta^{m+1} V_{r-(m+1)(p+1)}}= $
$\frac{V_r^{(m)}}{V_r^{(m+1)}}$ over $G$. %of $V_r$.
\end{lemma}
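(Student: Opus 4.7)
The plan is to derive part (a) by a direct application of Lemma~\ref{divconds} to $F_m$, and then treat part (b) by identifying the subquotient $V_r^{(m)}/V_r^{(m+1)}$ with a familiar quotient of a symmetric power and controlling the image of $F_m$ there.

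For part (a), I would apply Lemma~\ref{divconds} to $F_m = X^m Y^{r-m} - X^{r-b+m} Y^{b-m}$, which has exactly two nonzero coefficients $a_{r-m}=1$ and $a_{b-m}=-1$, both with index congruent to $b-m \pmod{p-1}$ (since $r \equiv b \pmod{p-1}$). Divisibility by $\theta^m$ requires the range $m \le i \le r-m$, equivalent to the hypothesis $b \ge 2m$ (noting $r > b$, since $t \ge 1$), together with the vanishing identities $j!\binom{r-m}{j} \equiv j!\binom{b-m}{j} \pmod p$ for $0 \le j \le m-1$. The latter follows by writing $j!\binom{x}{j} = x(x-1)\cdots(x-j+1)$ and observing that the difference is divisible by $(r-m)-(b-m) = r-b$, which in turn is divisible by $p^t$ and hence by $p$. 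To rule out divisibility by $\theta^{m+1}$, I would observe that the stricter range $m+1 \le i \le r-m-1$ fails for the monomial $X^m Y^{r-m}$ (where $i = r-m$), so $F_m \notin V_r^{(m+1)}$.

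For part (b), the subquotient is identified with $(V_s/V_s^{(1)}) \otimes D^m$ where $s = r - m(p+1) \equiv b - 2m \pmod{p-1}$, and the strict hypothesis $b > 2m$ ensures $s > 0$. Since $KZ$ acts on this subquotient through $\Gamma = \GL_2(\F_p)$, generating $\mathrm{ind}_{KZ}^{G}(V_r^{(m)}/V_r^{(m+1)})$ over $G$ by $[1, F_m]$ reduces to showing that the nonzero image of $F_m$ (which is nonzero by part (a)) generates $V_r^{(m)}/V_r^{(m+1)}$ as a $\Gamma$-module. I would proceed by producing enough $\Gamma$-translates of $F_m$ using upper- and lower-triangular matrices and the Weyl involution $\left(\begin{smallmatrix}0&1\\1&0\end{smallmatrix}\right)$, and then compare the span with a standard generator of $V_s/V_s^{(1)}$, invoking the generation results for such subquotients developed in \cite{Bhattacharya-Ghate, Bhattacharya-Ghate-Rozensztajn16}.

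The principal difficulty is part (b): an explicit division of $F_m$ by $\theta^m$ to read off its image in $V_s/V_s^{(1)}$ is not transparent, and $V_s/V_s^{(1)}$ need not be irreducible as a $\Gamma$-module when $s$ is large, so one must check that $F_m$ maps into the appropriate cosocle (and not into a proper submodule) before using the $\Gamma$-action to reach every element. The strict inequality $b > 2m$, compared with the weaker $b \ge 2m$ in part (a), should be exactly what ensures that the image of $F_m$ is generic, i.e.\ lies outside every proper $\Gamma$-submodule of $V_s/V_s^{(1)}$.
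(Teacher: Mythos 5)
Your part (a) is correct and is essentially the paper's argument: non-divisibility by $\theta^{m+1}$ is forced by the monomial $X^mY^{r-m}$, divisibility by $\theta^m$ follows from Lemma~\ref{divconds} once one notes that $j!\binom{r-m}{j}-j!\binom{b-m}{j}$ is divisible by $(r-m)-(b-m)=r-b$ and hence by $p$.

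Part (b), however, is only a plan, and the step you yourself flag as the ``principal difficulty'' --- identifying the image of $F_m$ in $V_r^{(m)}/V_r^{(m+1)}$ after dividing by $\theta^m$ --- is precisely the content of the paper's proof, so it cannot be left open. The paper resolves it in two concrete steps. First, it exhibits the quotient explicitly: it shows that
$$H_m:=F_m-(-1)^m\theta^m\bigl(Y^{r-m(p+1)}-Y^{b-2m}X^{r-b-pm+m}\bigr)$$
lies in $V_r^{(m+1)}$, by checking (in the style of Lemma~\ref{divconds}) that $X^{m+1}$ and $Y^{m+1}$ divide $H_m$ --- this is exactly where the strict inequality $b>2m$ is used, via the lowest $Y$-degree $b-m\geq m+1$ --- and that $h_m(z)=H_m(1,z)$ vanishes to order $m+1$ at every $\lambda\in\F_p^*$. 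This pins down the image of $F_m$ in $V_r^{(m)}/V_r^{(m+1)}\cong D^m\otimes V_{b-2m+p-1}/V_{b-2m+p-1}^{(1)}$ as, up to sign, the class of $Y^{b-2m+p-1}-Y^{b-2m}X^{p-1}$. Second, instead of producing $\Gamma$-translates by hand, it uses the structure of this quotient: by Lemma 5.3 of \cite{[Br03]} it is an extension of the irreducible $D^{b-m}\otimes V_{p-1-(b-2m)}$ by the irreducible $D^m\otimes V_{b-2m}$, and by Prop.~2.1 of \cite{Bhattacharya-Ghate} this extension is non-split when $0<b-2m<p-1$; consequently the only proper nonzero submodule is the sub, and any element with nonzero image in the top quotient generates the whole module. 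The image of $F_m$ there is $-X^{p-1-(b-2m)}\neq 0$, which finishes the proof. Your proposal contains neither the explicit representative of $F_m$ modulo $V_r^{(m+1)}$ nor the non-splitness argument; ``invoking the generation results'' of the cited papers is not a substitute, because those results apply to specific elements and the entire issue is to determine which element of $V_{b-2m+p-1}/V_{b-2m+p-1}^{(1)}$ the polynomial $F_m$ actually becomes. Your closing guess about the role of $b>2m$ (that it makes the image ``generic'') is also not quite the mechanism: it guarantees $Y^{m+1}\mid H_m$ and the non-splitness of the extension, after which one only needs a single nonzero coordinate in the cosocle.
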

\begin{proof}
(a) Any polynomial divisible by $\theta^{m+1}$ is a multiple of $X^{m+1}$, so  $\theta^{m+1}\nmid F_m$.

To show $\theta^m\mid F_m$, by Lemma \ref{divconds} we need to show both $m, b-m\geq m$, and further for all $0\leq j\leq m-1$, %$\lambda\in \F_p^*$, we need
%$\lambda^{r-m}-\lambda^{b-m}=0$, for $i=0$, and 
$$j!\left({r-m\choose j}-{b-m\choose j}\right)=0\mod p, $$
	which is ensured by the fact $t=v(r-b)\geq 1$.
 %for $i=1,2,\cdots, m-1$. %These conditions are satisfied since 
%\begin{enumerate}
%\item %$b-m-i>2m-m-m=0$, 
%$r\equiv b\mod (p-1)$, so  $\lambda^{r-m-i}=\lambda^{b-m-i}$ for each $\lambda\in\F_p^*$ and
%\item $v\left( (r-m-i)-(b-m-i)\right)=v(r-b)=t\geq 1$, for $i=0,1\cdots , m$.
%\end{enumerate}
Note that the last condition is satisfied for $j=m$ as well.\\

(b) % Let us write $r-b=sp^t(p-1)$, such that $p\nmid s$.
%\begin{eqnarray*}
%\frac{F_m(X,Y)}{\theta(X,Y)^m}&=& \frac{-X^{r-b+m}Y^{b-m}(1-Z^{r-b})}{X^{pm}Y^m(1-Z^{p-1})^m},\quad Z=Y/X\\
%&=&  \frac{-X^{r-b+m}Y^{b-m}(1-(1-U)^{sp^t})}{X^{pm}Y^m(U)^m},\quad U=1-Z^{p-1}=\frac{X^{p-1}-Y^{p-1}}{X^{p-1}}\\
%&=& \frac{-X^{r-b+m}Y^{b-m}(1-(1-U)^{sp^t})}{X^{pm}Y^m(U)^m},\\
%&=& \frac{-X^{r-b+m-pm}Y^{b-2m}\left(\sum_{i=1}^{sp^t} {sp^t\choose i}(-1)^{i+1}U^i\right)}{U^m},\\
%&=& -X^{r-b+m-pm}Y^{b-2m}\left(\sum_{i=m}^{sp^t} {sp^t\choose i}(-1)^{i+1}U^{i-m}\right),
%\end{eqnarray*}
%since $t\geq 1$ and $m\leq p$ together implies ${sp^t\choose i}\equiv 0\mod p$ for $1\leq i\leq m-1$.
We first claim that the polynomial
$$H_m(X,Y):=F_m(X,Y)-(-1)^m\theta(X,Y)^m(Y^{r-m(p+1)}-Y^{b-2m}X^{r-b-pm+m})\in V_r$$ 
lies in the submodule $V_r^{(m+1)}$. 

Assuming the claim, it is enough to show the image of  $\theta^m(Y^{r-m(p+1)}-Y^{b-2m}X^{r-b-pm+m})$ 
generates $\dfrac{V_r^{(m)}}{V_r^{(m+1)}}\cong D^m\otimes \dfrac{V_{r-m(p+1)}}{ V_{r-m(p+1)}^{(1)}}
\cong D^m\otimes \dfrac{V_{b-2m+p-1}}{ V_{b-2m+p-1}^{(1)}}$. From Lemma 5.3, \cite{[Br03]}, we obtain the short exact sequence
$$0\rightarrow D^m\otimes V_{b-2m}\rightarrow D^m\otimes \dfrac{V_{b-2m+p-1}}{ V_{b-2m+p-1}^{(1)}}
\rightarrow D^{b-m}\otimes V_{p-1-(b-2m)}\rightarrow 0,$$ 
which does not split as $0<b-2m\leq b-2<p-1$ (Prop. 2.1, \cite{Bhattacharya-Ghate}). Hence it is enough to show that the image of
$\theta^m( Y^{r-m(p+1)}-Y^{b-2m}X^{r-b-pm+m})$ maps to a non-zero element in the quotient above. 
We check that in fact its image 
$$Y^{b-2m+p-1}-Y^{b-2m}X^{p-1}\in D^m\otimes \dfrac{V_{b-2m+p-1}}{V_{b-2m+p-1}^{(1)}}$$ maps to 
$-X^{p-1-(b-2m)}\in D^{b-m}\otimes V_{p-1-(b-2m)}$.

\underline{Proof of claim}: The lowest degree of $X$ in  $H_m(X,Y)$ is $\geq m+p-1\geq m+1$, and the lowest degree of $Y$ in $H_m(X,Y)$ is $\geq b-m\geq m+1$, as $b>2m$ by hypothesis. 
Following the proof of Lemma \ref{divconds}, we consider
\begin{eqnarray*}
h_m(z):=H_m(1,z)&=&z^{r-m}-z^{b-m}-(-1)^m(z-z^p)^m(z^{r-m(p+1)}-z^{b-2m})\\
        &=& z^{r-m}-z^{b-m}-(z^{p-1}-1)^m(z^{r-mp}-z^{b-m}).
\end{eqnarray*}
We already know $X^{m+1}, Y^{m+1}$ divide $H_m(X,Y)$, hence
$$\theta^{m+1}\mid H_m(X,Y)
\iff (z-\lambda)^{m+1}\mid h_m(z), \quad \forall \lambda\in \F_p^*.$$
Equivalently, we need $\frac{d^ih_m}{dz^i}(\lambda)=0$  for all $0\leq i\leq m$, and all $\lambda\in \F_p^*$.
For the first part $F_m(1,z)=z^{r-m}-z^{b-m}$ of $h_m(z)$, this vanishing of derivatives is already proved in part (a) above. For the other part 
$-(z^{p-1}-1)^m(z^{r-mp}-z^{b-m})$ of $h_m(z)$, the derivatives up to order $m$ vanish since $1-\lambda^{p-1}=0=\lambda^{r-mp}-\lambda^{b-m}$,
for all $\lambda\in \F_p^*$.
\end{proof}
%\begin{remark}
%In the lemma above, one can  replace $b$ by any $r_0$ satisfying $r\equiv r_0\mod p(p-1)$.
% The same proof should work. So we did not state any upper bound for the integer $b$ in the statement.
%\end{remark}

Now we recall a very useful fact from Remark 4.4, \cite{[BG09]}, that if
$v(a_p)<m$ and $r= k-2 \geq m(p+1)$, then $\bar\Theta_{k,a_p}$ is a quotient of
$\mathrm{ind}_{KZ}^G (V_r/V_r^{(m)})$. 
We fix an $a_p$ with positive  valuation, and let $n\in\N$ be the smallest such that $v(a_p)<n+1$, so we have  
\begin{equation}\label{surj}P:\mathrm{ind}_{KZ}^G (V_r/V_r^{(n+1)})\twoheadrightarrow \bar\Theta_{k,a_p}.\end{equation}
We consider the chain of submodules of length $n+1$
$$0\subseteq \frac{V_r^{(n)}}{V_r^{(n+1)}}\subseteq \frac{V_r^{(n-1)}}{V_r^{(n+1)}}\subseteq\cdots\subseteq \frac{V_r}{V_r^{(n+1)}}, $$ inducing 
$$0\subseteq M_n=\mathrm{ind}_{KZ}^G\left(\frac{V_r^{(n)}}{V_r^{(n+1)}}\right)\subseteq M_{n-1}=\mathrm{ind}_{KZ}^G\left(\frac{V_r^{(n-1)}}{V_r^{(n+1)}}\right)\subseteq\cdots\subseteq M_0=\mathrm{ind}_{KZ}^G \left(\frac{V_r}{V_r^{(n+1)}}\right), $$
with  respective images
\begin{equation}\label{chain}P(M_n)\subseteq P(M_{n-1})\subseteq\cdots \subseteq P(M_0):=\bar\Theta_{k,a_p}
\end{equation}
inside $\bar\Theta_{k,a_p}$. 
We have this chain of submodules inside $\bar\Theta_{k,a_p}$, and we will try to compute it piece by piece.
For example, we would like to check if some of the quotient factors in the chain above are in fact zero in $\bar\Theta_{k,a_p}$.

\subsection{Computations in characterstic 0:}
%\subsection{Revisiting the Hecke operator}
We extend the formula for the Hecke operator $T$ when acting on $\mathrm{ind}_{KZ}^{G}\Sym^r\bar\Q_p^2$ in particular, 
 to see how $T$ acts on its explicit elements viewed as $\Sym^r\bar\Q_p^2$-valued functions on the 
Bruhat-Tits tree for $\GL_2$.
 
For $m = 0$, set $I_0 = \{0\}$, and 
for $m >0$, let
$I_m = \{ [\lambda_0] + [\lambda_1] p + \cdots + [\lambda_{m-1}]p^{m-1}  \> : \>  \lambda_i \in \F_p \} 
              \subset \Z_p$,
where the square brackets denote Teichm\"uller representatives. For $m \geq 1$, 
there is a truncation map
$[\quad]_{m-1}: I_{m} \rightarrow I_{m-1}$ given by taking the first $m-1$ terms in the $p$-adic expansion above;
for $m = 1$, $[\quad]_{m-1}$ is the $0$-map.
Let $\alpha =  \left( \begin{smallmatrix} 1 & 0 \\ 0  & p \end{smallmatrix} \right)$. 
For $m \geq 0$ and $\lambda \in I_m$, let
\begin{eqnarray*}
  g^0_{m, \lambda} =  \left( \begin{smallmatrix} p^m & \lambda \\ 0 & 1 \end{smallmatrix} \right) & \quad \text{and} \quad 
  g^1_{m, \lambda} = \left( \begin{smallmatrix} 1 & 0  \\ p \lambda  & p^{m+1} \end{smallmatrix} \right),
\end{eqnarray*}
noting that $g^0_{0,0}=\mathrm{Id}$ is the identity matrix and $g_{0,0}^1=\alpha$ in $G$. % and
We have %the decomposition
%\begin{eqnarray*}
    $$G  = \coprod_{\substack{m\geq 0,\,\lambda \in I_m,\\ i\in\{0,1\}}} {KZ} (g^i_{m, \lambda})^{-1}.$$
%\end{eqnarray*}
Thus a general element in $\mathrm{ind}_{KZ}^G V$ is a finite sum of functions of the form $[g,v]$, 
with $g=g_{m,\lambda}^0$ or $\,g_{m,\lambda}^1$, for some $\lambda\in I_m$ and $v\in V$.
For a $\Z_p$-algebra $R$, let $v = \sum_{i=0}^r c_i X^{r-i} Y^i \in V = \mathrm{Sym}^r R^2\otimes D^s$.
Expanding the formula \eqref{T} for the Hecke operator $T$ one may write
$T  = T^+ + T^-$, with 
\begin{eqnarray}
 \label{T^+}  T^+([g^0_{n,\mu},v]) & = & \sum_{\lambda \in I_1} \left[ g^0_{n+1, \mu +p^n\lambda},    
         \sum_{j=0}^r \left( p^j \sum_{i=j}^r c_i \binom{i}{j}(-\lambda)^{i-j} \right) X^{r-j} Y^j \right], \\
 \label{T^-}  T^-([g^0_{n,\mu},v]) & = & \left[ g^0_{n-1, [\mu]_{n-1}},    
         \sum_{j=0}^r \left( \sum_{i=j}^r p^{r-i} c_i {i \choose j} 
         \left( \frac{\mu - [\mu]_{n-1}}{p^{n-1}} \right)^{i-j} \right) X^{r-j} Y^j \right]  (n > 0),\quad \\
  \label{T^-0}  T^-([g^0_{n,\mu},v]) & = &  [ \alpha,  \sum_{j=0}^r  p^{r-j}  c_j  X^{r-j} Y^j ]   (n=0). 
\end{eqnarray}
These explicit formulas for $T^+$ and $T^-$ will be used to compute $(T-a_p)f$ for the functions $f \in \mathrm{ind}_{KZ}^G \Sym^r\bar\Q_p^2$.

%
%
%\begin{lemma}\label{cong1}Let $r\equiv b\mod (p-1)$, $1\leq b\leq p-1$.
%For $i\geq b+1$, we have 
%$$p^i{r\choose i}\equiv 0\mod p^{t+b+1},$$ unless $b=p-1$ and $i=b+1$, in which case we have 
%$v\left(p^i{r\choose i}\right)= t+b$.
%\end{lemma}
%\begin{proof} For $i\geq b+1$, let $i=i_0+i_1p+\cdots i_lp^l$, $0\leq i_j\leq p-1$ for each $j$.
%\begin{eqnarray*}
%v\left(p^i{r\choose i}\right)&=&v\left(p^i\cdot\frac{r(r-1)\cdots(r-i+1)}{i!}\right)\\
%&\geq& i+t-v(i!)=t+i-\frac{i-(i_0+i_1+\cdots i_l)}{p-1}\\
%&& \qquad \left( \text{as  }  (r-b)\mid r(r-1)\cdots(r-i+1),\: \forall i\geq b+1\right)\\
%&=& t+\frac{i(p-2)+i_0+i_1+\cdots i_l}{p-1}\\
%&=& t+\frac{i_0(p-1)+i_1(p(p-2)+1)+\cdots +i_l(p^l(p-2)+1)}{p-1}\\
%&\geq &\begin{cases}
%t+i_0=t+i\geq t+b+1, &\text{if } i\leq p-1\\
 %%t+i_0+\frac{p^2-2p+1}{p-1}=
 %t+i_0+p-1\geq t+i_0+b &\text{if } i\geq p,
 %\end{cases}
%\end{eqnarray*}
%since $\forall i\geq p$, there is at least one $1\leq j\leq l$, such that $i_j\geq 1$, forcing the summand
%$$\frac{i_j(p^j(p-2)+1)}{p-1}\geq  \frac{p(p-2)+1}{p-1}=p-1.$$
%So the valuation is always $\geq t+b+1$, unless $b=p-1$, $i=b+1=p$, so $i_0=0, i_1=1$, in which case
%$v\left(p^i{r\choose i}\right)=p+t-1=t+b$.
%\end{proof}
Next let us define, for $0\leq i\leq b$, and $0\leq m<p-1$, the sums
\begin{eqnarray}
S_{r,i,m}&:=&\underset{\substack{j\equiv b-m\mod (p-1)\\0\leq j<r-m}}\sum {j\choose i}{r\choose j}\\
&=& \underset{\substack{j\equiv b-m\mod (p-1)\\i\leq j<r-m}}\sum {r\choose i}{r-i\choose j-i}\\
&=& {r\choose i}\cdot\left( \underset{\substack{j\equiv b-m\mod (p-1)\\i\leq j\leq r}}\sum {r-i\choose j-i}-{r-i\choose r-m-i}\right)\\
\label{A0}&=& \tilde{S}_{r,i,m}- {r\choose i}{r-i\choose m},%\quad=\quad \tilde{S}_{r,i,m}- {r-m\choose i}{r\choose r-m},
%&=& {r\choose i}\cdot\left( \underset{\substack{j\equiv b-m\mod (p-1)\\0\leq l< r-i-m}}\sum {r-i\choose l}\right)
\end{eqnarray}
where $\tilde{S}_{r,i,m}:=\underset{\substack{j\equiv b-m\mod (p-1)\\0\leq j\leq r}}\sum {j\choose i}{r\choose j}$.

 With this notation, we state the following technical lemma.

\begin{lemma}\label{cong2}Let $r=b+sp^t(p-1)$ with $p\nmid s$, so that $t=v(r-b)$.
For $0\leq i<b$ and $0\leq m<p-1$, one has
$$S_{r,i,m}\equiv {r\choose i}\left({b-i\choose m}-{r-i\choose m}\right)\mod p^{t+1}\equiv 0\mod p^t.$$
\end{lemma}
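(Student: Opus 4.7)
The plan is to reduce everything to a single binomial coefficient using the $(p-1)$-th roots-of-unity filter applied to $(1+X)^{r-i}$. Using the standard identity $\binom{j}{i}\binom{r}{j} = \binom{r}{i}\binom{r-i}{j-i}$ together with the substitution $k=j-i$, one rewrites
$$\tilde{S}_{r,i,m} \;=\; \binom{r}{i}\sum_{\substack{k \equiv b-m-i \pmod{p-1}\\ 0 \leq k \leq r-i}} \binom{r-i}{k}.$$
In view of \eqref{A0}, the first target congruence for $S_{r,i,m}$ is equivalent to
$$\sum_{\substack{k \equiv b-m-i \pmod{p-1}\\ 0 \leq k \leq r-i}} \binom{r-i}{k} \;\equiv\; \binom{b-i}{m} \pmod{p^{t+1}}, \qquad (\ast)$$
and both halves of the lemma will be deduced from $(\ast)$.

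To prove $(\ast)$, let $\zeta_d \in \Z_p$ denote the Teichm\"uller lift of $d \in \F_p^\times$, so the $\zeta_d$ exhaust the $(p-1)$-th roots of unity in $\Z_p$. Orthogonality (valid in $\Z_p$, since $p-1$ is a $p$-adic unit) gives
$$\sum_{\substack{k \equiv c \pmod{p-1}\\ 0 \leq k \leq N}} \binom{N}{k} \;=\; \frac{1}{p-1}\sum_{d \in \F_p^\times} \zeta_d^{-c}(1+\zeta_d)^N.$$
The crucial step is then to show $(1+\zeta_d)^{r-i} \equiv (1+\zeta_d)^{b-i} \pmod{p^{t+1}}$ for every $d$. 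For $d = -1$ this is automatic: $\zeta_{-1} = -1$, and both exponents $r-i, b-i \geq 1$ (using $i<b$), so both terms vanish. For $d \neq -1$ one has $1+\zeta_d \in \Z_p^\times$ and $(1+\zeta_d)^{p-1} \equiv 1 \pmod{p}$ by Fermat, so $(1+\zeta_d)^{p-1} = 1 + pu_d$ for some $u_d \in \Z_p$; the standard consequence of the binomial theorem $(1+pu)^{p^t} \equiv 1 \pmod{p^{t+1}}$ then yields $(1+\zeta_d)^{sp^t(p-1)} \equiv 1 \pmod{p^{t+1}}$, and multiplying through by $(1+\zeta_d)^{b-i}$ gives the comparison. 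Hence the left side of $(\ast)$ is congruent mod $p^{t+1}$ to the analogous sum with $r-i$ replaced by $b-i$.

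Since $0 \leq b-i < p-1$, at most one index $k \in [0, b-i]$ lies in the prescribed residue class, namely $k = b-i-m$ when $0 \leq m \leq b-i$, contributing $\binom{b-i}{b-i-m} = \binom{b-i}{m}$; and if $m > b-i$ the sum is empty while $\binom{b-i}{m} = 0$. This proves $(\ast)$ and hence the first congruence. The second congruence $S_{r,i,m} \equiv 0 \pmod{p^t}$ is then immediate: $r-i \equiv b-i \pmod{p^t}$ and $m!$ is a $p$-adic unit (since $m < p$), so the formula $\binom{n}{m} = n(n-1)\cdots(n-m+1)/m!$ gives $\binom{r-i}{m} \equiv \binom{b-i}{m} \pmod{p^t}$, making the right side of the first congruence vanish modulo $p^t$. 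I expect the only point requiring care to be the lifting $(1+\zeta_d)^{p-1} \equiv 1 \pmod{p} \Rightarrow (1+\zeta_d)^{sp^t(p-1)} \equiv 1 \pmod{p^{t+1}}$; note that individual binomial coefficients $\binom{r-i}{k}$ only match $\binom{b-i}{k}$ modulo $p^t$, so the sharper $p^{t+1}$ is supplied entirely by the cancellations that arise from summing over a residue class mod $(p-1)$.
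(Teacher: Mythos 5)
Your argument is correct and is essentially the paper's own proof in a different dress: both rest on the $(p-1)$-th roots-of-unity filter applied to $(1+\zeta)^{r-i}$, the observation that $(1+\zeta)^{r-b}=(1+\zeta)^{sp^t(p-1)}\equiv 1 \pmod{p^{t+1}}$ for $\zeta\neq -1$ (with the $\zeta=-1$ term killed by $b-i\geq 1$), and finally $\binom{r-i}{m}\equiv\binom{b-i}{m}\pmod{p^t}$ because $m!$ is a $p$-adic unit. The only divergence is how the limiting sum is identified with $\binom{b-i}{m}$ --- you read it off as the unique surviving term of $\sum_{k\equiv b-i-m}\binom{b-i}{k}$, whereas the paper substitutes $r=b$ back into its identity to evaluate the constant $B$ --- and both versions (like the statement itself) quietly exclude the degenerate case $m=0$ with $b-i\equiv 0\pmod{p-1}$, where two terms survive; this is harmless since the lemma is only invoked with $m\geq 1$.
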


\begin{proof}
Define
\begin{eqnarray}
\nonumber g_{r,i,m}(x)&:=& \frac{x^{i-(b-m)}}{i!} \cdot \frac{d}{dx^i}(1+x)^r\\ %f_r^{(i)}(x)\\
\label{A}&=& %x^{i-(b-n)}{r\choose i} \cdot f_{r-i}(x) \quad= \quad 
x^{i-(b-m)}{r\choose i} \cdot (1+x)^{r-i}\\
\nonumber &=& \quad   \sum_{0\leq j \leq r-i}{r\choose i}{r-i\choose j}x^{j+i-(b-m)}\\
\label{AA}&=& \sum_{i\leq l \leq r}\quad {r\choose l}  {l\choose i}x^{l-(b-m)},   \quad \text{ with }  l=i+j
\end{eqnarray}
Evaluating  \eqref{AA} at $x=\zeta$ and taking sum over all $\zeta\in\mu_{p-1}$,
\begin{eqnarray*}
\underset{\zeta\in\mu_{p-1}}\sum g_{r,i}(\zeta)&=& \sum_{i\leq l \leq r}\quad {r\choose l}  {l\choose i} \underset{\zeta\in\mu_{p-1}}\sum \zeta^{l-(b-m)}\\
&=& \sum_{\substack{i\leq l \leq r\\l\equiv b-m\mod (p-1)}}\quad {r\choose l}  {l\choose i} (p-1)
=(p-1)\tilde{S}_{r,i,m}
\end{eqnarray*}
Hence by \eqref{A}, we have %$g_{r,i}(-1)=0$, and thus
\begin{eqnarray}
%\underset{\zeta\in\mu_{p-1}\setminus\{-1\}}\sum g_{r,i}(\zeta)&=&(p-1)\tilde{S}_{r,i}\\
 {r\choose i}\underset{\zeta\in\mu_{p-1}}\sum \zeta^{i-(b-m)}(1+\zeta)^{r-i}&=&(p-1)\tilde{S}_{r,i,m}\\
\label{B}\implies {r\choose i}\underset{\zeta\in\mu_{p-1}}\sum \zeta^{i-(b-m)}(1+\zeta)^{b-i}(1+\zeta)^{r-b}&=&(p-1)\tilde{S}_{r,i,m}.
\end{eqnarray}
If $\zeta\neq -1$, then $(1+\zeta)^{r-b}=(1+\zeta)^{(p-1)sp^t}=(1+pz_\zeta)^{sp^t}\equiv 1\mod p^{t+1}$ and so
%=(1+pz_\zeta)^{mp^t}\equiv 1+mp^{t+1}z_\zeta\mod p^{t+2}$. 
%Putting $i=0$ in \eqref{B}, we get
%\begin{equation}
%\underset{\zeta\in\mu_{p-1}\setminus\{-1\}}\sum \zeta^{-b+n}(1+\zeta)^{b}(1+mp^{t+1}z_\zeta)\equiv(p-1)\tilde{S}_{r,0}\mod p^{t+2}
%\end{equation} 
%In particular,
%\begin{equation}\label{C}
%(p-1)\tilde{S}_{r,0}\equiv \underset{\zeta\in\mu_{p-1}\setminus\{-1\}}\sum \zeta^{-b+n}(1+\zeta)^{b}\mod p^{t+1}
%\end{equation}
\begin{equation}\label{C}
(p-1)\tilde{S}_{r,i,m}\equiv {r\choose i}\cdot \underset{\zeta\in\mu_{p-1}\setminus\{-1\}}\sum \zeta^{i-b+m}(1+\zeta)^{b-i}
={r\choose i}\cdot B\mod p^{t+1},
\end{equation}
where $B:=\underset{\zeta\in\mu_{p-1}\setminus\{-1\}}\sum \zeta^{i-b+m}(1+\zeta)^{b-i}$ only depends on $b, m $ and $i$ 
(not on $s$ or $t$). 

Putting $r=b$ %and $i=0$
 in \eqref{B}, we get 
\begin{eqnarray*}
{b\choose i}\cdot B&=&(p-1)\tilde{S}_{b,i,m}=(p-1){b\choose b-m}{b-m\choose i}\\
&\implies & B=(p-1){b-i\choose m}.
\end{eqnarray*}
Hence from \eqref{C}, we obtain 
%\begin{eqnarray}
%\nonumber (p-1)\tilde{S}_{r,0}&\equiv&(p-1){b\choose n}\mod p^{t+1}\\
%\implies S_{r,0}&\equiv&{b\choose n}-{r\choose n}\mod p^{t+1}.
%\end{eqnarray}
\begin{eqnarray*}
\nonumber (p-1)\tilde{S}_{r,i,m}&\equiv&(p-1){r\choose i}{b-i\choose m}\mod p^{t+1}\\
{\overset{\eqref{A0}}\implies} S_{r,i,m}=\tilde{S}_{r,i,m}- {r\choose i}{r-i\choose m} &\equiv&{r\choose i}\left({b-i\choose m}-{r-i\choose m}\right)\mod p^{t+1}.
\end{eqnarray*}
%Generalising for all $0\leq i\leq n$, we get 
%\begin{eqnarray}
 %S_{r,i}&\equiv& {r\choose i}\left({b-i\choose n}-{r-i\choose n}\right)\mod p^{t+1}\\
%&\equiv &  0\mod p^t ,
%\end{eqnarray}
We also conclude that $$S_{r,i,m}\equiv 0\mod p^t,$$ as ${b-i\choose m}-{r-i\choose m}\in \frac{(r-b)}{m!}\Z\subset (r-b)\Z_p$, for $m\leq p-1$.
\end{proof}

\begin{prop}\label{mainProp}
Let $a_p\in m_{\bar\Z_p}$ be fixed. % such that $v(a_p)>0$.
Assume that $r=k-2\equiv b\mod p^t(p-1)$, such that  $2v(a_p)\leq b\leq p-1$. In the case $2v(a_p)=b$, 
further assume $b$ is odd and that $ p^{-b/2} a_p \not\equiv \pm 1 \mod m_{\bar\Z_p}$.
If $t>2v(a_p)-1$, then there is a surjection $$\mathrm{ind}_{KZ}^G (V_r/V_r^{(1)})\twoheadrightarrow \bar\Theta_{k,a_p}.$$
\end{prop}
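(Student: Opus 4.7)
The plan is to show that the surjection $P: \mathrm{ind}_{KZ}^G V_r \twoheadrightarrow \bar\Theta_{k,a_p}$ factors through the quotient $V_r / V_r^{(1)}$. Using the surjection \eqref{surj}, it suffices to prove $P(M_1) = 0$. I will proceed by descending induction on $m$ from $n$ down to $1$, showing that each successive quotient $P(M_m)/P(M_{m+1})$ in the chain \eqref{chain} vanishes, with the convention $P(M_{n+1}) := 0$. Since $n \le v(a_p)$ and $b \ge 2v(a_p)$, one checks $b > 2m$ for all $m \le n$ in every case allowed by the hypothesis -- the boundary case $b = 2v(a_p)$ being excluded precisely when $b$ is even (and hence $b = 2n$) -- so Lemma \ref{polynomial}(b) applies and each subquotient $M_m / M_{m+1}$ is generated over $G$ by $[\mathrm{Id}, F_m]$.

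The inductive step amounts to the following: assuming $P(M_{m+1}) = 0$, I want to construct an explicit witness
\[
f_m = \sum_{\substack{0 \le j < r-m \\ j \equiv b-m \pmod{p-1}}} \gamma_j \, [\mathrm{Id}, X^{r-j} Y^j] \in \mathrm{ind}_{KZ}^G \Sym^r \bar\Z_p^2,
\]
with coefficients $\gamma_j \in \bar\Z_p$ normalized by appropriate powers of $a_p$, such that the reduction mod $p$ of $(T - a_p) f_m$ represents a non-zero scalar multiple of $[\mathrm{Id}, F_m]$ modulo $M_{m+1}$. Since $(T - a_p) f_m$ vanishes in $\bar\Theta_{k,a_p}$ by \eqref{definetheta}, the image of $F_m$ is killed in $\bar\Theta_{k,a_p}/P(M_{m+1})$, and the induction proceeds.

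To compute $(T - a_p) f_m$ I will apply formulas \eqref{T^+}, \eqref{T^-}, \eqref{T^-0}. The $T^-$-part of $T[\mathrm{Id}, X^{r-j} Y^j]$ is supported at $\alpha$ and produces a polynomial whose $j$-dependence, after summing over the congruence class $j \equiv b - m \pmod{p-1}$, collapses precisely into the sums $S_{r, i, m}$ of Lemma \ref{cong2}. The $T^+$-part is supported one level further in the Bruhat--Tits tree and must be rearranged (using $KZ$-translations and the trivial observation that $\theta(X,Y)^{m+1}$ divides any polynomial landing in $V_r^{(m+1)}$) so that its net contribution lies in $M_{m+1}$ up to terms that can be absorbed into the $T^-$-supported expression. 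Lemma \ref{cong2} then supplies the two crucial facts: the congruence $S_{r, i, m} \equiv \binom{r}{i}\bigl(\binom{b-i}{m} - \binom{r-i}{m}\bigr) \pmod{p^{t+1}}$ identifies the leading term as a non-trivial multiple of $F_m$, while the divisibility $v(S_{r, i, m}) \ge t$ combined with the hypothesis $t > 2v(a_p) - 1$ affords enough $p$-adic depth to cancel the factor of $a_p$ arising from the $(T - a_p)$-relation and still remain in the integral lattice.

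The hard part will be the combinatorial bookkeeping: after unwinding the Hecke formulas, one must verify that the sum of all surviving terms modulo $V_r^{(m+1)}$ is a \emph{non-zero} constant multiple of $F_m$ rather than something lying entirely in $V_r^{(m+1)}$, which would render the whole construction vacuous. In the sharp boundary case $b = 2v(a_p)$ with $b$ odd, the coefficient of $F_n$ produced by this construction at the base step $m = n$ turns out to be a low-degree polynomial in the unit $p^{-b/2} a_p \in \bar\Z_p^\times$, and the extra hypothesis $p^{-b/2} a_p \not\equiv \pm 1 \pmod{m_{\bar\Z_p}}$ is exactly what is needed to ensure this coefficient is invertible mod $p$. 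The exceptional condition therefore has a transparent meaning: it avoids a resonance between the Hecke eigenvalue and the combinatorial constants precisely at the extreme boundary of the allowed range.
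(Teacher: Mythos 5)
Your high-level strategy coincides with the paper's: reduce to showing $P(M_m/M_{m+1})=0$ for $1\le m\le n$, note that $b>2m$ throughout so that Lemma \ref{polynomial}(b) identifies $F_m$ as a generator of each graded piece, and exhibit an explicit element whose image under $T-a_p$ is integral and reduces to a unit multiple of $F_m$; Lemma \ref{cong2} together with $t>2v(a_p)-1$ does the $p$-adic bookkeeping, and the boundary hypothesis $p^{-b/2}a_p\not\equiv\pm1$ plays exactly the role you assign to it.

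However, the witness you propose --- a single function supported at the identity coset --- cannot work, and this is a genuine gap rather than deferred bookkeeping. Write $F_m=X^mY^{r-m}-X^{r-b+m}Y^{b-m}$; its two monomials sit at $j=r-m$ and $j=b-m$. The three pieces of $(T-a_p)f_m$ live over three different cosets ($\mathrm{Id}$, $\alpha$, and the level-one vertices), so one of them alone must produce the surviving multiple of $F_m$. The $-a_pf_m$ piece is supported on exponents $j<r-m$ in your ansatz, so it misses the monomial $X^mY^{r-m}$ entirely. By \eqref{T^-0} the $T^-$ piece multiplies the coefficient of $X^{r-j}Y^j$ by $p^{r-j}$ and produces no binomial sums --- the sums $S_{r,i,m}$ actually arise from $T^+$, via the inner sum $\sum_{i\ge j} c_i\binom{i}{j}(-\lambda)^{i-j}$ in \eqref{T^+}, not from $T^-$ as you claim --- so it cannot contribute a unit at $j=r-m$ either. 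And by \eqref{T^+} the coefficient of $X^{r-j}Y^j$ in $T^+f_m$ carries a factor $p^j$, so forcing a unit at $j=r-m$ would require input coefficients of valuation about $-(r-m)$, destroying the integrality of the other components of $(T-a_p)f_m$. The paper resolves this with a two-level witness $f=f_0+f_1$: $f_0$ is essentially your identity-supported sum with coefficients $\frac{(p-1)p^m}{a_p^2}\binom{r}{j}$ (note these are \emph{not} integral), while $f_1$ is supported at the level-one vertices $g^0_{1,[\lambda]}$ and carries $F_m/a_p$ and $F_0/a_p$. Then $T^-f_1$ cancels $a_pf_0$, the terms $T^+f_0$ and $T^+f_1$ are controlled by Lemma \ref{cong2} and the hypothesis on $t$, and the surviving term is $-a_pf_1\equiv[g^0_{1,0},-\binom{r}{m}F_m]$ --- plus, when $b=2v(a_p)$, an extra contribution from $T^+f_0$ that yields the factor $\binom{b}{m}(p^ba_p^{-2}-1)$ and hence the hypothesis on $p^{-b/2}a_p$. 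You would need to add this second, level-one component to your ansatz before the computation can produce anything nonzero.
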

\begin{proof}
With the notation as in \eqref{chain}, we will show that $P(M_1)=0$, by showing $P(M_n)=0$, $P(M_{n-1}/M_n)=0,
\cdots$,  and finally  $P(M_1/M_2)=0$, so the map $P$ factors through
$$P: \frac{M_0}{M_1}=\mathrm{ind}_{KZ}^G \left(\frac{V_r}{V_r^{(1)}}\right)\twoheadrightarrow \bar\Theta_{k,a_p}.$$
For each $m$ with $1\leq m\leq n=\lfloor v(a_p)\rfloor$, we define the element $f=f_0+f_1\in 
\mathrm{ind}_{KZ}^G\Sym^r\bar\Q_p^2$, as follows:
%\begin{eqnarray}
%f_0&=& \left[1, \frac{p^m}{a_p^2}(p-1)\underset{\substack{0\leq j<r-m\\ j\equiv b-m\mod (p-1)}}\sum{r-m+1\choose j}X^{r-j}Y^j\right],\\
%f_1&=& \left[g_{1,0}^0,\:(r-m+1)(1-p)\cdot \frac{F_m(X,Y)}{a_p}\right]+
%\underset{\lambda\in \F_p^*}\sum\left[g_{1,[\lambda]}^0, \:\frac{p}{[\lambda]}\cdot\frac{F_{m-1}(X,Y)}{a_p}\right],
%\end{eqnarray}
%where $F_j(X,Y)$ are the polynomials as defined in Lemma \ref{polynomial}.
%This might not work in some cases as we will need $2n<b-m+1$ for $T^+f_o$ to vanish.
\begin{eqnarray}
 f_0 &=& \left[1, \quad \frac{(p-1)p^m}{a_p^2}\cdot\left(\underset{\substack{0\leq j<r-m\\ j\equiv b-m\mod (p-1)}}\sum{r \choose j}X^{r-j}Y^j\right)\right],\\
 f_1 &=& \left[g_{1,0}^0,\:(1-p){r\choose m}\cdot \frac{F_m(X,Y)}{a_p}\right]+
\underset{\lambda\in \F_p^*}\sum\left[g_{1,[\lambda]}^0, \: \left(\frac{p}{[\lambda]}\right)^m\cdot\frac{F_{0}(X,Y)}{a_p}\right],\quad
\end{eqnarray}
where $F_j(X,Y)$ are the polynomials as defined in \eqref{F_m}. %Lemma \ref{polynomial}. 
Let us compute how the operators $T^+$ and $T^-$ 
act on parts of the function $f$. 

%\underline{$T^-f_0$:} 
We note that $(p^m/a_p^2).p^{m+p-1}$ is integral and vanishes in characteristic $p$, 
as we have  assumed 
that $2v(a_p)\leq b\leq p-1$, and $m\geq 1$. By formula \eqref{T^-0}, $T^-f_0$ vanishes.
%
%\underline{$T^+f_0$}: 
\begin{eqnarray*}
T^+f_0&{\overset{\eqref{T^+}}=}& \underset{\lambda\in \F_p}\sum\left[g_{1,[\lambda]}^0,\underset{j=0}{\overset{r}\sum}
\frac{p^{j+m}(p-1)}{a_p^2}
\underset{\substack{j\leq i<r-m\\i\equiv b-m\mod (p-1)}}\sum{r\choose i}{i\choose j}(-[\lambda])^{i-j}X^{r-j}Y^j\right]\\
&\equiv& \underset{\lambda\in \F_p}\sum\left[g_{1,[\lambda]}^0,\underset{j=0}{\overset{b-1}\sum}
\frac{p^{j+m}(p-1)}{a_p^2}
\underset{\substack{j\leq i<r-m\\i\equiv b-m\mod (p-1)}}\sum{r\choose i}{i\choose j}(-[\lambda])^{i-j}X^{r-j}Y^j\right]\mod\wp\\
&&\hspace{3.8in}\text{ as } b+m> 2v(a_p),\\
&\equiv& \underset{\lambda\in \F_p^*}\sum\left[g_{1,[\lambda]}^0,\underset{j=0}{\overset{b-1}\sum}
\frac{p^{j+m}(p-1)}{a_p^2}(-[\lambda])^{b-m-j} S_{r,j,m}
%\underset{\substack{j\leq i<r-m\\i\equiv b-m\mod (p-1)}}\sum{r\choose i}{i\choose j}
X^{r-j}Y^j\right]\\
&& \quad +[g_{1,0}^0, \frac{p^b(p-1)}{a_p^2}{r\choose b-m} X^{r-(b-m)}Y^{b-m}]\\
&\equiv &[g_{1,0}^0, \frac{p^b(p-1)}{a_p^2}{r\choose b-m} X^{r-(b-m)}Y^{b-m}]\mod\wp,
\end{eqnarray*}
by Lemma \ref{cong2}, as we know that $t+j+m-2v(a_p)\geq t+m-2v(a_p)\geq t-(2v(a_p)-1)>0$.
Hence $T^+f_0$ is integral for $b\geq 2v(a_p)$ and 
vanishes in characteristic $p$ if $b>2v(a_p)$.

%For $ j>b$, the $X^{r-j}Y^j$-terms in $T^+f_0$ vanish mod $\wp$ by formula \eqref{T^+}, 
%and Lemma \ref{cong1}. Here we used $p^j{i\choose j}{r\choose i}=p^j{r\choose j}{r-j\choose i-j}$, for all $i$.
%
 %For $0\leq j<b$, and $j\not=b-m$, the $X^{r-j}Y^j$-terms in $T^+f_0$ vanish mod $\wp$ by formula \eqref{T^+}, 
%using Lemma \ref{cong2}, as the coefficients turn out to be $(p^m/a_p^2)\cdot S_{r,j}$, up to a unit, therefore having
%valuation $\geq m-2v(a_p)+t \geq t-(2v(a_p)-1)>0$.
%
%The calculation for the term $X^{r-(b-m)}Y^{b-m}$ is more subtle. 
%The same argument as above works at all the neighbours of the vertex representing 
%trivial coset in the tree, other than the one representing $g_{1,0}^0$. However at $g_{1,0}^0$,
%the coefficient of $X^{r-(b-m)}Y^{b-m}$ would be $\frac{p^b(p-1)}{a_p^2}\cdot {r\choose b-m}$. 
%This is integral for $b\geq 2v(a_p)$ and vanishes mod $\wp$ if $b>2v(a_p)$.

%\underline{$T^-f_1$}: 
Using the formula \eqref{T^-} on $f_1$, we compute that
\begin{eqnarray*}
T^-f_1&\equiv& \left[\mathrm{Id}, \:  \frac{p^m}{a_p}(1-p){r\choose m}X^mY^{r-m}\right]\\
&& +\left[\mathrm{Id},\:   \underset{j=0}{\overset{r}\sum}\frac{p^m}{a_p}{r\choose j}\left(\underset{\lambda\in\F_p^*}\sum([\lambda])^{r-j-m}\right) X^{r-j}Y^j\right]\mod (p^{r-b+m-v(a_p)})\\
&\equiv & \left[\mathrm{Id}, \: \frac{p^m}{a_p}(1-p){r\choose m}X^mY^{r-m}
                           \:\: +\underset{\substack{0\leq j\leq r\\j\equiv b-m\mod (p-1)}}\sum \frac{p^m}{a_p}(p-1){r\choose j}X^{r-j}Y^j\right]\mod \wp\\
 &= &     \left[\mathrm{Id}, \: 
                      \underset{\substack{0\leq j<r-m \\ j\equiv b-m\mod (p-1)}}\sum \frac{p^m}{a_p}(p-1){r\choose j}X^{r-j}Y^j\right]   \\
 &=& a_pf_0,                                        
\end{eqnarray*}
so that  $T^-f_1-a_pf_0$ is integral and  vanishes in characteristic $p$.

%\underline{$T^+f_1$:}
 We note that for $0\leq i<b-m$,
$$v\left({r-m\choose i}-{b-m\choose i}\right)=v\left({b-m+p^t(p-1)s\choose i}-{b-m\choose i}\right)\geq t,$$
and using the formula \eqref{T^+}, we conclude that $T^+f_1$ is integral and vanishes mod $\wp$, since 
$t>2v(a_p)-1>v(a_p)$ (one may assume $v(a_p)>1$, as for $v(a_p)\leq 1$ the result already follows from 
\cite{[BG09], Bhattacharya-Ghate-Rozensztajn16}).

Finally we note that for $m\geq 1$, we have 
$$-a_pf_1\equiv \left[g_{1,0}^0,\: -{r\choose m}F_m(X,Y)\right]\mod p.$$
Now considering all the components of $(T-a_p)f$, we know that it is integral and reduces to
$\left[g_{1,0}^0,\: -{r\choose m}F_m(X,Y)\right]$ modulo $\wp$, if $b>2v(a_p)$. We note that 
${r\choose m}$ is a $p$-adic unit, as $r\equiv b\mod p$ by the hypothesis, and $m\leq\lfloor v(a_p)\rfloor <b$.

However, if $b=2v(a_p)$ is odd, then we get
$$(T-a_p)f\equiv \left[g_{1,0}^0,\: -{r\choose m}F_m(X,Y)
-\frac{p^b}{a_p^2}\cdot {r\choose b-m}X^{r-(b-m)}Y^{b-m}\right]\mod\wp,$$
which is also integral.
By Rem. 4.4 of \cite{[BG09]}, we know that there is some function $f'$ such that $(T-a_p)f'$ is
integral and reduces to $\frac{p^b}{a_p^2} {r\choose b-m}\cdot[g_{1,0}^0, X^mY^{r-m}]$, 
for $m\leq \lfloor v(a_p)\rfloor<v(a_p)$. 
Therefore $$(T-a_p)(f+f')\equiv \left[g_{1,0}^0,\: -\left({r\choose m}-\frac{p^b}{a_p^2}\cdot {r\choose b-m}\right)F_m(X,Y)\right]\mod\wp.$$
Now, under the assumption $v(r-b)=t>2v(a_p)-1>0$, and for $m\leq \lfloor v(a_p)\rfloor \leq b$, 
the constant above reduces to ${b\choose b-m}\frac{p^b}{a_p^2}-{b\choose m}={b\choose m}\left(\frac{p^b}{a_p^2}-1\right)\mod\wp$, which is a unit if and only if $\frac{a_p}{p^{b/2}}\not\equiv \pm 1\mod \wp$.

In any case, multiplying by a unit, we conclude that under the hypothesis of the proposition $[g_{1,0}^0, F_m(X,Y)]\xmapsto{P} 0\in \bar\Theta_{k,a_p}$.  
%By hypothesis, we are in the case where $b=2v(a_p)$ is odd. 
Also for all $m\leq \lfloor v(a_p)\rfloor$, we  have $2m<b$.
Therefore it follows from Lemma 
\ref{polynomial}(b) that $[g_{1,0}^0, F_m(X,Y)]$ generates $M_m/M_{m+1}=\mathrm{ind}_{KZ}^G\left( \frac{V_r^{(m)}}{V_r^{(m+1)}}\right)$ over $G$. 
As the map $P$ is $G$-linear, we have $P(M_m/M_{m+1})=0$ for all $m=1,\cdots, \lfloor v(a_p)\rfloor$, and thus $\bar\Theta_{k,a_p}$ must be a quotient of $\mathrm{ind}_{KZ}^G\left(V_r/V_r^{(1)}\right)$.
\end{proof}

\begin{thm}\label{thmending}
Under the hypotheses of Proposition \ref{mainProp}, we have $$\bar V_{k,a_p}\cong  \mathrm{ind}_{G_{\Q_{p^2}}}^{G_{\Q_p}}(\omega_2^{b+1})\cong \bar V_{b+2, a_p}.$$
\end{thm}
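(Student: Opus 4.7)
The plan is to combine Proposition~\ref{mainProp} with Breuil's mod $p$ Local Langlands Correspondence to identify $\bar\Theta_{k,a_p}$ explicitly as a supersingular $G$-representation. Since $v(a_p) > 0$, we have $\bar a_p \equiv 0 \bmod \wp$, so the defining Hecke relation $T - a_p$ reduces to $T$ mod $p$. Coupled with Proposition~\ref{mainProp}, this gives that $\bar\Theta_{k,a_p}$ is a quotient of $\mathrm{ind}_{KZ}^G(V_r/V_r^{(1)})/T$ as a $G$-representation.

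Next I would invoke the $\Gamma$-module structure of $V_r/V_r^{(1)}$ already used in the proof of Lemma~\ref{polynomial}(b): by Lemma~5.3 of \cite{[Br03]}, for $0 < b < p-1$ there is a non-split short exact sequence
\[0 \to V_b \to V_r/V_r^{(1)} \to V_{p-1-b} \otimes D^b \to 0,\]
with the boundary cases $b=0$ and $b=p-1$ being simpler (degenerate). Inducing to $G$ and passing to the quotient by $T$, the sub part yields the supersingular representation $\pi(b, 0, 1) = \mathrm{ind}_{KZ}^G V_b / T$, while the quotient part yields $\pi(p-1-b, 0, \omega^b)$. By the standard isomorphism $\pi(b, 0, 1) \cong \pi(p-1-b, 0, \omega^b)$ of supersingular representations (\cite[\S4]{[Br03]}, and implicit in the well-definedness of Breuil's LLC), both pieces are the same $G$-representation. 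Consequently every irreducible constituent of $\mathrm{ind}_{KZ}^G(V_r/V_r^{(1)})/T$ is isomorphic to $\pi(b, 0, 1)$, and in particular so is every irreducible constituent of the quotient $\bar\Theta_{k,a_p}$.

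Since $\bar\Theta_{k,a_p}$ corresponds under the mod $p$ LLC to the $2$-dimensional object $\bar V_{k,a_p}^{ss}$, it must contain exactly one Jordan--H\"older constituent on the $G$-side, forcing $\bar\Theta_{k,a_p} \cong \pi(b, 0, 1)$. Applying the $\lambda = 0$ case of Breuil's LLC recalled in \S\ref{subsectionLLC} then gives $\bar V_{k,a_p}^{ss} \cong \mathrm{ind}_{G_{\Q_{p^2}}}^{G_{\Q_p}}(\omega_2^{b+1})$; irreducibility of this induced character (which holds since $0 < b < p-1$ implies $b+1 \not\equiv 0 \bmod (p+1)$) upgrades this to an isomorphism for $\bar V_{k,a_p}$ itself. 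For the second isomorphism, apply Proposition~\ref{mainProp} to the pair $(k', a_p) = (b+2, a_p)$: here $r' = b$, so that $V_{r'}^{(1)} = 0$ and the proposition directly presents $\bar\Theta_{b+2,a_p}$ as a non-zero quotient of $\mathrm{ind}_{KZ}^G V_b / T = \pi(b, 0, 1)$, hence equal to it by irreducibility.

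The main obstacle is Step~2: pinning down the structure of $\mathrm{ind}_{KZ}^G(V_r/V_r^{(1)})/T$ precisely enough to force the quotient $\bar\Theta_{k,a_p}$ to be $\pi(b, 0, 1)$. Carefully tracking the twist by $D^b$ through the compact induction and the Barthel--Livn\'e--Breuil classification of irreducibles is the technical heart. The boundary cases $b \in \{0, p-1\}$ (if allowed by the numerical hypotheses) demand separate treatment, but the argument is simpler there since the Breuil exact sequence degenerates.
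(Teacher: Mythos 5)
Your skeleton is the paper's: start from the surjection of Proposition \ref{mainProp}, feed in the exact sequence $0 \to V_b \to V_r/V_r^{(1)} \to V_{p-1-b}\otimes D^b \to 0$, and finish with Breuil's correspondence; your treatment of $\bar V_{b+2,a_p}$ (apply the proposition with $r'=b$, so $V_{r'}^{(1)}=0$) is also fine. But the step you yourself flag as ``the technical heart'' is not a technicality --- it is false as stated. Reducing the formulas \eqref{T^+}--\eqref{T^-0} mod $p$, the value of $T^+$ is a multiple of $X^r$ and the value of $T^-$ is a multiple of $(\mu' X+Y)^r$ (resp.\ $Y^r$), so the image of $T$ in characteristic $p$ consists of functions valued in the span of $r$-th powers of $\F_p$-rational linear forms. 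By the very fact of Glover that the paper invokes, the image of that span in $V_r/V_r^{(1)}$ is exactly the submodule $V_b$. Hence $T\bigl(\mathrm{ind}_{KZ}^G(V_r/V_r^{(1)})\bigr)\subseteq \mathrm{ind}_{KZ}^G V_b$, the operator induced by $T$ on the quotient $\mathrm{ind}_{KZ}^G(V_{p-1-b}\otimes D^b)$ is \emph{zero} rather than the standard Hecke operator, and $\mathrm{ind}_{KZ}^G(V_r/V_r^{(1)})/T$ surjects onto the \emph{full} compact induction $\mathrm{ind}_{KZ}^G(V_{p-1-b}\otimes D^b)$, which admits $\pi(p-1-b,\lambda,\omega^b)$ as a quotient for every $\lambda$. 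So it is not true that every irreducible constituent of $\mathrm{ind}_{KZ}^G(V_r/V_r^{(1)})/T$ is $\pi(b,0,1)$, and the conclusion does not follow. A concrete witness: for $v(a_p)=1/2$ and $k\equiv 3 \bmod (p-1)$ with $a_p/p^{1/2}\equiv\pm1$, the reduction is reducible \cite{Buzzard-Gee13}, even though $\bar\Theta_{k,a_p}$ is a nonzero quotient of $\mathrm{ind}_{KZ}^G(V_r/V_r^{(1)})/T$ there as well.

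The missing input cannot come from the observation that $T-a_p\equiv T \bmod p$ on integral functions; it comes from applying $T-a_p$ to suitable \emph{non-integral} functions, which produces extra integral relations in $\bar\Theta_{k,a_p}$. This is exactly how the paper proceeds: by Glover, $V_b\subseteq V_r/V_r^{(1)}$ is generated over $\Gamma$ by the image of $Y^r$, and by Remark 4.4 of \cite{[BG09]} (a characteristic-zero computation using $v(a_p)>0$) the elements $[g,Y^r]$ die in $\bar\Theta_{k,a_p}$; hence the image $F_1$ of $\mathrm{ind}_{KZ}^G V_b$ vanishes and $\bar\Theta_{k,a_p}$ is a quotient of $\mathrm{ind}_{KZ}^G(V_{p-1-b}\otimes D^b)$ alone. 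Only then does one use that $\bar\Theta_{k,a_p}$ lies in the image of the mod $p$ correspondence --- a weight/$K$-socle comparison in the style of \cite{[BG09]} --- to identify it with the supersingular $\pi(p-1-b,0,\omega^b)$. Your intertwining isomorphism $\pi(b,0,1)\cong\pi(p-1-b,0,\omega^b)$ is correct but irrelevant to the actual difficulty, since the quotient piece you need to control is not $\pi(p-1-b,0,\omega^b)$ to begin with.
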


\begin{proof}
By Proposition \ref{mainProp}, we have a surjection 
$\mathrm{ind}_{KZ}^G (V_r/V_r^{(1)})\twoheadrightarrow 
\bar\Theta_{k,a_p}.$ By Prop. 2.1 of \cite{ Bhattacharya-Ghate}, and since the compact induction is an exact functor, we have a short exact sequence
$$0\rightarrow \mathrm{ind}_{KZ}^G V_b\rightarrow\mathrm{ind}_{KZ}^G (V_r/V_r^{(1)})\rightarrow \mathrm{ind}_{KZ}^G (V_{p-1-b}\otimes D^b)\rightarrow 0.$$
Let the image inside $\bar\Theta_{k,a_p}$ of the submodule $ \mathrm{ind}_{KZ}^G V_b$  be denoted by $F_1$. Then the quotient $F_2:=\bar\Theta_{k,a_p}/F_1$ must factor through  $\mathrm{ind}_{KZ}^G (V_{p-1-b}\otimes D^b)$, and we have the following commutative diagram.
\begin{equation*}
    \xymatrix{0 \ar[r] & \mathrm{ind}_{KZ}^G V_b\ar@{>>}[d]\ar[r] & \mathrm{ind}_{KZ}^G (V_r/V_r^{(1)})\ar@{>>}[d] \ar[r] &
       \mathrm{ind}_{KZ}^G (V_{p-1-b}\otimes D^b)\ar@{>>}[d]\ar[r] & 0 \\
               0 \ar[r] & F_{1}\ar[r]  & \bar{\Theta}_{k,a_p}\ar[r] & F_2 \ar[r] & 0 .}
 \end{equation*}
% Since  $\bar\Theta_{k,a_p}$ lies in the image of mod $p$ LLC, it is expected to have one, or two,
%  or at most three (in the Steinberg case) JH factors as $\bar{\F_p}[G]$-modules. By Lemma 3.2 of \cite{[BG09]}, we know that each factor of 
%  $F_1$ is a subquotient of $\pi(b,\lambda,1)$ for some $\lambda\in \bar\F_p$, and each factor of $F_2$ is a subquotient of some 
%  $\pi(p-1-b, \lambda', \omega^{b})$. 
%  Looking at the classification of irreducible representations of $G$ by Barthel-Livne and Breuil, and looking at the explicit map 
%  $LL$ (\S\ref{subsectionLLC}), one concludes that $\bar\Theta_{k,a_p}$ cannot possibly be reducible in this case. Therefore there are two possibilities:
%  \begin{enumerate}
%  \item $F_1=0$, and $\bar\Theta_{k,a_p}\cong F_2\cong\pi(p-1-b,0,\omega^{b})$,
%  \item $F_2=0$, and $\bar\Theta_{k,a_p}\cong F_1\cong \pi(b,0,1)$.
%    \end{enumerate}
%  However, these two supercuspidal representations are isomorphic by Corollaire 4.1.3, \cite{Breuil1}, and are in Local Langlands correspondence with the irreducible representation $\mathrm{ind}_{G_{p^2}}^{G_p}(\omega_2^{b+1})$. 
By  (4.5) in \cite{ Glover}, the submodule $V_b$ of $V_r/V_r^{(1)}$ is generated by the image of the monomial $Y^r$  over $\Gamma$.
Since $v(a_p)>0$, we also know that $\mathrm{ind}_{KZ}^G\langle Y^r\rangle$ maps to $0\in\bar\Theta_{k,a_p}$, using Remark 4.4 in \cite{[BG09]}. 
Thus we conclude $F_1=0$ and $\bar\Theta_{k,a_p}$ is a quotient of
 $\mathrm{ind}_{KZ}^G (V_{p-1-b}\otimes D^b)$. 
 Since  $\bar\Theta_{k,a_p}$ lies in the image of mod $p$ LLC, it must be 
 isomorphic to the supercuspidal representation $\pi(p-1-b,0,\omega^{b})$,
 which is in correspondence with the irreducible Galois representation
  $\mathrm{ind}_{G_{\Q_{p^2}}}^{G_{\Q_p}}(\omega_2^{b+1})$. 
 By Theorem 2.6 of \cite{Edixhoven92} this is isomorphic to $\bar V_{b+2,a_p}$ and that completes our proof.
\end{proof}
 
\section*{Acknowledgements}
During this work the author was supported by PBC post-doctoral fellowship and afterwords by the post-doctoral research grant from MPIM, Bonn. The author thanks Prof. E. Ghate for introducing to the problem of reduction mod $p$ and for the numerous useful discussions on the subject.

  \end{document}